\begin{document}

\renewcommand*{\backref}[1]{}
\renewcommand*{\backrefalt}[4]{%
    \ifcase #1 (Not cited.)%
    \or        (p.\,#2)%
    \else      (pp.\,#2)%
    \fi}     
    
\newtheorem{theorem}{Theorem}
\newtheorem{lemma}[theorem]{Lemma}
\newtheorem{example}[theorem]{Example}
\newtheorem{algol}{Algorithm}
\newtheorem{corollary}[theorem]{Corollary}
\newtheorem{prop}[theorem]{Proposition}
\newtheorem{proposition}[theorem]{Proposition}
\newtheorem{problem}[theorem]{Problem}
\newtheorem{conjecture}[theorem]{Conjecture}

\theoremstyle{remark}
\newtheorem{definition}[theorem]{Definition}
\newtheorem{question}[theorem]{Question}
\newtheorem{remark}[theorem]{Remark}
\newtheorem*{acknowledgement}{Acknowledgements}

\numberwithin{equation}{section}
\numberwithin{theorem}{section}
\numberwithin{table}{section}
\numberwithin{figure}{section}

\allowdisplaybreaks

\definecolor{olive}{rgb}{0.3, 0.4, .1}
\definecolor{dgreen}{rgb}{0.,0.5,0.}

\def\cc#1{\textcolor{red}{#1}} 

\newcommand{\commAB}[1]{\marginpar{%
\begin{color}{dgreen}
\vskip-\baselineskip 
\raggedright\footnotesize
\itshape\hrule \smallskip AB: #1\par\smallskip\hrule\end{color}}\ignorespaces}
 
\newcommand{\commAO}[1]{\marginpar{%
\begin{color}{blue}
\vskip-\baselineskip 
\raggedright\footnotesize
\itshape\hrule \smallskip AO: #1\par\smallskip\hrule\end{color}}\ignorespaces}

\newcommand{\commIS}[1]{\marginpar{%
\begin{color}{magenta}
\vskip-\baselineskip 
\raggedright\footnotesize
\itshape\hrule \smallskip IS: #1\par\smallskip\hrule\end{color}}\ignorespaces}

\newcommand{\commJS}[1]{\marginpar{%
\begin{color}{red}
\vskip-\baselineskip 
\raggedright\footnotesize
\itshape\hrule \smallskip JS: #1\par\smallskip\hrule\end{color}}\ignorespaces}
\def\cA{{\mathcal A}}
\def\cB{{\mathcal B}}
\def\cC{{\mathcal C}}
\def\cD{{\mathcal D}}
\def\cE{{\mathcal E}}
\def\cF{{\mathcal F}}
\def\cG{{\mathcal G}}
\def\cH{{\mathcal H}}
\def\cI{{\mathcal I}}
\def\cJ{{\mathcal J}}
\def\cK{{\mathcal K}}
\def\cL{{\mathcal L}}
\def\cM{{\mathcal M}}
\def\cN{{\mathcal N}}
\def\cO{{\mathcal O}}
\def\cP{{\mathcal P}}
\def\cQ{{\mathcal Q}}
\def\cR{{\mathcal R}}
\def\cS{{\mathcal S}}
\def\cT{{\mathcal T}}
\def\cU{{\mathcal U}}
\def\cV{{\mathcal V}}
\def\cW{{\mathcal W}}
\def\cX{{\mathcal X}}
\def\cY{{\mathcal Y}}
\def\cZ{{\mathcal Z}}

\def\C{\mathbb{C}}
\def\K{\mathbb{K}}
\def\Z{\mathbb{Z}}
\def\R{\mathbb{R}}
\def\Q{\mathbb{Q}}
\def\N{\mathbb{N}}
\def\L{\mathbb{L}}
\def\M{\textsf{M}}
\def\U{\mathbb{U}}
\def\P{\mathbb{P}}
\def\A{\mathbb{A}}
\def\fp{\mathfrak{p}}
\def\fq{\mathfrak{q}}
\def\n{\mathfrak{n}}
\def\X{\mathcal{X}}
\def\x{\textrm{\bf x}}
\def\w{\textrm{\bf w}}
\def\ovQ{\overline{\Q}}
\def \Kab{\K^{\mathrm{ab}}}
\def \Qab{\Q^{\mathrm{ab}}}
\def \Qtr{\Q^{\mathrm{tr}}}
\def \Kc{\K^{\mathrm{c}}}
\def \Qc{\Q^{\mathrm{c}}}
\def\ZK{\Z_\K}
\def\ZKS{\Z_{\K,\cS}}
\def\ZKSf{\Z_{\K,\cS_f}}
\def\RSfG{R_{\cS_{f,\Gamma}}}

\def\S{\mathcal{S}}
\def\vec#1{\mathbf{#1}}
\def\ov#1{{\overline{#1}}}
\def\Sp{{\operatorname{S}}}
\def\Gm{\G_{\textup{m}}}
\def\fA{{\mathfrak A}}
\def\fB{{\mathfrak B}}

\def\house#1{{%
    \setbox0=\hbox{$#1$}
    \vrule height \dimexpr\ht0+1.4pt width .5pt depth \dp0\relax
    \vrule height \dimexpr\ht0+1.4pt width \dimexpr\wd0+2pt depth \dimexpr-\ht0-1pt\relax
    \llap{$#1$\kern1pt}
    \vrule height \dimexpr\ht0+1.4pt width .5pt depth \dp0\relax}}


\newenvironment{notation}[0]{%
  \begin{list}%
    {}%
    {\setlength{\itemindent}{0pt}
     \setlength{\labelwidth}{1\parindent}
     \setlength{\labelsep}{\parindent}
     \setlength{\leftmargin}{2\parindent}
     \setlength{\itemsep}{0pt}
     }%
   }%
  {\end{list}}

\newenvironment{parts}[0]{%
  \begin{list}{}%
    {\setlength{\itemindent}{0pt}
     \setlength{\labelwidth}{1.5\parindent}
     \setlength{\labelsep}{.5\parindent}
     \setlength{\leftmargin}{2\parindent}
     \setlength{\itemsep}{0pt}
     }%
   }%
  {\end{list}}
\newcommand{\Part}[1]{\item[\upshape#1]}

\def\Case#1#2{%
\smallskip\paragraph{\textbf{\boldmath Case #1: #2.}}\hfil\break\ignorespaces}

\renewcommand{\a}{\alpha}
\renewcommand{\b}{\beta}
\newcommand{\g}{\gamma}
\renewcommand{\d}{\delta}
\newcommand{\e}{\epsilon}
\newcommand{\f}{\varphi}
\newcommand{\fhat}{\hat\varphi}
\newcommand{\bfphi}{{\boldsymbol{\f}}}
\renewcommand{\l}{\lambda}
\renewcommand{\k}{\kappa}
\newcommand{\lhat}{\hat\lambda}
\newcommand{\bfmu}{{\boldsymbol{\mu}}}
\renewcommand{\o}{\omega}
\renewcommand{\r}{\rho}
\newcommand{\rbar}{{\bar\rho}}
\newcommand{\s}{\sigma}
\newcommand{\sbar}{{\bar\sigma}}
\renewcommand{\t}{\tau}
\newcommand{\z}{\zeta}


\newcommand{\ga}{{\mathfrak{a}}}
\newcommand{\gb}{{\mathfrak{b}}}
\newcommand{\gn}{{\mathfrak{n}}}
\newcommand{\gp}{{\mathfrak{p}}}
\newcommand{\gP}{{\mathfrak{P}}}
\newcommand{\gq}{{\mathfrak{q}}}

\newcommand{\Abar}{{\bar A}}
\newcommand{\Ebar}{{\bar E}}
\newcommand{\kbar}{{\bar k}}
\newcommand{\Kbar}{{\bar K}}
\newcommand{\Pbar}{{\bar P}}
\newcommand{\Sbar}{{\bar S}}
\newcommand{\Tbar}{{\bar T}}
\newcommand{\gbar}{{\bar\gamma}}
\newcommand{\lbar}{{\bar\lambda}}
\newcommand{\ybar}{{\bar y}}
\newcommand{\phibar}{{\bar\f}}

\newcommand{\Acal}{{\mathcal A}}
\newcommand{\Bcal}{{\mathcal B}}
\newcommand{\Ccal}{{\mathcal C}}
\newcommand{\Dcal}{{\mathcal D}}
\newcommand{\Ecal}{{\mathcal E}}
\newcommand{\Fcal}{{\mathcal F}}
\newcommand{\Gcal}{{\mathcal G}}
\newcommand{\Hcal}{{\mathcal H}}
\newcommand{\Ical}{{\mathcal I}}
\newcommand{\Jcal}{{\mathcal J}}
\newcommand{\Kcal}{{\mathcal K}}
\newcommand{\Lcal}{{\mathcal L}}
\newcommand{\Mcal}{{\mathcal M}}
\newcommand{\Ncal}{{\mathcal N}}
\newcommand{\Ocal}{{\mathcal O}}
\newcommand{\Pcal}{{\mathcal P}}
\newcommand{\Qcal}{{\mathcal Q}}
\newcommand{\Rcal}{{\mathcal R}}
\newcommand{\Scal}{{\mathcal S}}
\newcommand{\Tcal}{{\mathcal T}}
\newcommand{\Ucal}{{\mathcal U}}
\newcommand{\Vcal}{{\mathcal V}}
\newcommand{\Wcal}{{\mathcal W}}
\newcommand{\Xcal}{{\mathcal X}}
\newcommand{\Ycal}{{\mathcal Y}}
\newcommand{\Zcal}{{\mathcal Z}}

\renewcommand{\AA}{\mathbb{A}}
\newcommand{\BB}{\mathbb{B}}
\newcommand{\CC}{\mathbb{C}}
\newcommand{\FF}{\mathbb{F}}
\newcommand{\GG}{\mathbb{G}}
\newcommand{\KK}{\mathbb{K}}
\newcommand{\NN}{\mathbb{N}}
\newcommand{\PP}{\mathbb{P}}
\newcommand{\QQ}{\mathbb{Q}}
\newcommand{\RR}{\mathbb{R}}
\newcommand{\ZZ}{\mathbb{Z}}

\newcommand{\bfa}{{\boldsymbol a}}
\newcommand{\bfb}{{\boldsymbol b}}
\newcommand{\bfc}{{\boldsymbol c}}
\newcommand{\bfd}{{\boldsymbol d}}
\newcommand{\bfe}{{\boldsymbol e}}
\newcommand{\bff}{{\boldsymbol f}}
\newcommand{\bfg}{{\boldsymbol g}}
\newcommand{\bfi}{{\boldsymbol i}}
\newcommand{\bfj}{{\boldsymbol j}}
\newcommand{\bfk}{{\boldsymbol k}}
\newcommand{\bfm}{{\boldsymbol m}}
\newcommand{\bfp}{{\boldsymbol p}}
\newcommand{\bfr}{{\boldsymbol r}}
\newcommand{\bfs}{{\boldsymbol s}}
\newcommand{\bft}{{\boldsymbol t}}
\newcommand{\bfu}{{\boldsymbol u}}
\newcommand{\bfv}{{\boldsymbol v}}
\newcommand{\bfw}{{\boldsymbol w}}
\newcommand{\bfx}{{\boldsymbol x}}
\newcommand{\bfy}{{\boldsymbol y}}
\newcommand{\bfz}{{\boldsymbol z}}
\newcommand{\bfA}{{\boldsymbol A}}
\newcommand{\bfF}{{\boldsymbol F}}
\newcommand{\bfB}{{\boldsymbol B}}
\newcommand{\bfD}{{\boldsymbol D}}
\newcommand{\bfG}{{\boldsymbol G}}
\newcommand{\bfI}{{\boldsymbol I}}
\newcommand{\bfM}{{\boldsymbol M}}
\newcommand{\bfP}{{\boldsymbol P}}
\newcommand{\bfX}{{\boldsymbol X}}
\newcommand{\bfY}{{\boldsymbol Y}}
\newcommand{\bfzero}{{\boldsymbol{0}}}
\newcommand{\bfone}{{\boldsymbol{1}}}

\newcommand{\aff}{{\textup{aff}}}
\newcommand{\Aut}{\operatorname{Aut}}
\newcommand{\Berk}{{\textup{Berk}}}
\newcommand{\Birat}{\operatorname{Birat}}
\newcommand{\characteristic}{\operatorname{char}}
\newcommand{\codim}{\operatorname{codim}}
\newcommand{\Crit}{\operatorname{Crit}}
\newcommand{\critwt}{\operatorname{critwt}} 
\newcommand{\Cycle}{\operatorname{Cycles}}
\newcommand{\diag}{\operatorname{diag}}
\newcommand{\Disc}{\operatorname{Disc}}
\newcommand{\Div}{\operatorname{Div}}
\newcommand{\Dom}{\operatorname{Dom}}
\newcommand{\End}{\operatorname{End}}
\newcommand{\ExtOrbit}{\mathcal{EO}} 
\newcommand{\Fbar}{{\bar{F}}}
\newcommand{\Fix}{\operatorname{Fix}}
\newcommand{\FOD}{\operatorname{FOD}}
\newcommand{\FOM}{\operatorname{FOM}}
\newcommand{\Gal}{\operatorname{Gal}}
\newcommand{\genus}{\operatorname{genus}}
\newcommand{\GITQuot}{/\!/}
\newcommand{\GL}{\operatorname{GL}}
\newcommand{\GR}{\operatorname{\mathcal{G\!R}}}
\newcommand{\Hom}{\operatorname{Hom}}
\newcommand{\Index}{\operatorname{Index}}
\newcommand{\Image}{\operatorname{Image}}
\newcommand{\Isom}{\operatorname{Isom}}
\newcommand{\hhat}{{\hat h}}
\newcommand{\Ker}{{\operatorname{ker}}}
\newcommand{\Ksep}{K^{\textup{sep}}}  
\newcommand{\lcm}{{\operatorname{lcm}}}
\newcommand{\LCM}{{\operatorname{LCM}}}
\newcommand{\Lift}{\operatorname{Lift}}
\newcommand{\limstar}{\lim\nolimits^*}
\newcommand{\limstarn}{\lim_{\hidewidth n\to\infty\hidewidth}{\!}^*{\,}}
\newcommand{\llog}{\log\log}
\newcommand{\logplus}{\log^{\scriptscriptstyle+}}
\newcommand{\Mat}{\operatorname{Mat}}
\newcommand{\maxplus}{\operatornamewithlimits{\textup{max}^{\scriptscriptstyle+}}}
\newcommand{\MOD}[1]{~(\textup{mod}~#1)}
\newcommand{\Mor}{\operatorname{Mor}}
\newcommand{\Moduli}{\mathcal{M}}
\newcommand{\Norm}{{\operatorname{\mathsf{N}}}}
\newcommand{\notdivide}{\nmid}
\newcommand{\normalsubgroup}{\triangleleft}
\newcommand{\NS}{\operatorname{NS}}
\newcommand{\onto}{\twoheadrightarrow}
\newcommand{\ord}{\operatorname{ord}}
\newcommand{\Orbit}{\mathcal{O}}
\newcommand{\Per}{\operatorname{Per}}
\newcommand{\Perp}{\operatorname{Perp}}
\newcommand{\PrePer}{\operatorname{PrePer}}
\newcommand{\PGL}{\operatorname{PGL}}
\newcommand{\Pic}{\operatorname{Pic}}
\newcommand{\Prob}{\operatorname{Prob}}
\newcommand{\Proj}{\operatorname{Proj}}
\newcommand{\Qbar}{{\bar{\QQ}}}
\newcommand{\rank}{\operatorname{rank}}
\newcommand{\Rat}{\operatorname{Rat}}
\newcommand{\Res}{{\operatorname{Res}}}
\newcommand{\Resultant}{\operatorname{Res}}
\renewcommand{\setminus}{\smallsetminus}
\newcommand{\sgn}{\operatorname{sgn}}
\newcommand{\SL}{\operatorname{SL}}
\newcommand{\Span}{\operatorname{Span}}
\newcommand{\Spec}{\operatorname{Spec}}
\renewcommand{\ss}{{\textup{ss}}}
\newcommand{\stab}{{\textup{stab}}}
\newcommand{\Stab}{\operatorname{Stab}}
\newcommand{\Support}{\operatorname{Supp}}
\newcommand{\Sym}{\operatorname{Sym}}  
\newcommand{\tors}{{\textup{tors}}}
\newcommand{\Trace}{\operatorname{Trace}}
\newcommand{\trianglebin}{\mathbin{\triangle}} 
\newcommand{\tr}{{\textup{tr}}} 
\newcommand{\UHP}{{\mathfrak{h}}}    
\newcommand{\Wander}{\operatorname{Wander}}
\newcommand{\<}{\langle}
\renewcommand{\>}{\rangle}

\newcommand{\pmodintext}[1]{~\textup{(mod}~#1\textup{)}}
\newcommand{\ds}{\displaystyle}
\newcommand{\longhookrightarrow}{\lhook\joinrel\longrightarrow}
\newcommand{\longonto}{\relbar\joinrel\twoheadrightarrow}
\newcommand{\SmallMatrix}[1]{%
  \left(\begin{smallmatrix} #1 \end{smallmatrix}\right)}

\title[Multiplicative dependence of rational functions]
{Multiplicative dependence among iterated values of rational functions modulo finitely generated groups}

\author[A. B\' erczes] {Attila B\' erczes}
\address{Institute of Mathematics, University of Debrecen, H-4010 Debrecen, P.O. BOX 12, Hungary}
\email{berczesa@science.unideb.hu}

\author[A. Ostafe] {Alina Ostafe}
\address{School of Mathematics and Statistics, University of New South Wales, Sydney NSW 2052, Australia}
\email{alina.ostafe@unsw.edu.au}

\author[I. E. Shparlinski] {Igor E. Shparlinski}
\address{School of Mathematics and Statistics, University of New South Wales, Sydney NSW 2052, Australia}
\email{igor.shparlinski@unsw.edu.au}

\author[J. H. Silverman]{Joseph H. Silverman}
\address{Mathematics Department, Box 1917, 
  Brown University, Providence, RI 02912 USA}
\email{jhs@math.brown.edu}

\subjclass[2010]{37P15, 11R27}

\keywords{arithmetic dynamics, multiplicative dependence} 

\begin{abstract} 
We study multiplicative dependence between elements in orbits of
algebraic dynamical systems over number fields modulo a finitely
generated multiplicative subgroup of the field.  We obtain a series of
results, many of which may be viewed as a blend of Northcott's
theorem on boundedness of preperiodic points and Siegel's theorem on
finiteness of solutions to $S$-unit equations.
\end{abstract}

\maketitle

\section{Introduction and statements of main results}

\subsection{Motivation} 
Let $\K/\QQ$ be a number field with algebraic closure~$\ov\K$,
and let~$f(X)\in\K(X)$ be a rational function of degree at last~$2$.
A famous theorem of Northcott~\cite{Northcott}
says that the set of~$\ov\K$-preperiodic points of~$f$ is a set of
bounded Weil height. In particular, the set of $\K$-rational
preperiodic points is finite.  In~\cite{OSSZ1} this result is
extended to cover the case that points in an orbit are
multiplicatively dependent, rather than forcing them to be equal. In
this paper we extend this further to the case of points in an orbit
that are multiplicatively dependent modulo a finitely generated
subgroup~$\Gamma$ of~$\K^*$. Anticipating notation that is described
in Section~\ref{subsection:notation}, we are interested in
solutions~$(n,k,\alpha,r,s)$ to the relation
\[
  f^{(n+k)}(\a)^r\cdot f^{(k)}(\a)^s \in \Gamma,
\]
where~$n\ge1$,~$k\ge0$,~$\alpha\in\K$, and~$(r,s)\ne(0,0)$.  We view this
as a combination of Northcott's already cited theorem and Siegel's
theorem concerning integral points on affine curves, which at its
heart deals with integral solutions to equations of the form
$F(x,y)\in\Gamma$ for homogenous $F\in\K[X,Y]$. And indeed, a key tool in
the proof of several of our main results is a theorem on dynamical
Diophantine approximation (Lemma~\ref{lem:lvfnageehffna}) that
ultimately relies on Roth's theorem and is very much analogous to the
use of Diophantine approximation in the proof of Siegel's theorem.

\subsection{Notation and conventions}
\label{subsection:notation}
We now set the following notation, which remains fixed for the remainder of
this paper:
\begin{notation}
 \item[\textbullet]  $\K$ is a number field.
  \item[\textbullet]  $R_\K$ is the ring of algebraic integers of $\K$.
 \item[\textbullet]  $\ov\K$ is an algebraic closure of $\K$.
 \item[\textbullet]  $f(X)\in\K(X)$ a rational function of degree $d\ge2$.
 \item[\textbullet]  For $n\ge0$, we write $f^{(n)}(X)$ for the $n$th iterate of~$f$, i.e., 
  \[
    f^{(n)}(X) := \underbrace{f\circ f\circ\cdots\circ f}_{\text{$n$ copies}}(X).
  \]
 \item[\textbullet]  For $\a\in\PP^1(\KK)$, we write $\Orbit_f(\a)$
  for the (forward) orbit of~$\a$, i.e., 
 \[
   \Orbit_f(\a) := \left \{ f^{(n)}(\a):n\ge 0 \right\}.
 \]
 \item[\textbullet]  $\Gamma$ is  a finitely generated subgroup of $\K^*$.
 \item[\textbullet] $\PrePer(f)$ is  the set of preperiodic points of $f$ in $\P^1(\ov\K)$, i.e., 
   the set of points $\a\in\PP^1(\ov\K)$ such that $\Orbit_f(\a)$ is finite.
 \item[\textbullet] $\Wander_\K(f)$ is the complement of the set $\PrePer(f)$ in $\P^1(\K)$,  i.e.,
         the set $\P^1(\K)\setminus \bigl(\PrePer(f)\cap\K\bigr)$ of $\K$-rational wandering points for~$f$.
 \item[\textbullet] $\Z_{\ge r}$ denotes the set of integers $n \ge r$, where $r$ is a real number.  
\end{notation}

It is also convenient to define the function
\[
  \logplus t =\log\max\{t,1\}. 
\]

We use $M_\K$ to denote a complete set of inequivalent absolute values
on~$\K$, normalized so that the absolute Weil height
$h:\ov{\K}\to[0,\infty)$ is defined by
\begin{equation}
\label{eqn:defht}
  h(\b) = \sum_{v\in M_\K} \logplus \bigl( \|\beta\|_v \bigr),
\end{equation}
and we write~$M_\K^\infty$ and~$M_\K^0$ for, respectively, the set of
archimedean and non-archimedean absolute values
in~$M_\K$. See~\cite{HinSil,Lang} for further details on absolute
values and height functions.
  
 As in~\cite[Section~1.2]{Silv07}, for a rational function
 $f(X)\in\C(X)$ and $\alpha\in\C$ with $\alpha\neq \infty$ and
 $f(\alpha)\ne\infty$, we define the \textit{ramification index} of $f$
 at $\alpha$ as the order of $\alpha$ as a zero of the rational
 function $f(X)-f(\alpha)$, i.e.,
\[
   e_f(\alpha)=\ord_\alpha(f(X)-f(\alpha)).
\]
In particular, we say that $f$ is \emph{ramified at $\alpha$} if $
e_f(\alpha)\ge 2$, and \textit{totally ramified at $\alpha$} if
$e_f(\alpha)=\deg f$.  If $\alpha=\infty$ or $f(\alpha)=\infty$, we
define $e_f(\alpha)$ by choosing a linear fractional transformation
$L\in\PGL_2(\C)$ so that $\beta=L^{-1}(\alpha)$ satisfies $\beta\ne
\infty$ and $f_L(\beta)\ne \infty$, where $f_L=L^{-1}\circ f\circ L$, and then we set
 \[
   e_f(\alpha)= e_{f_L}(\beta).
 \]
It is an exercise using the chain rule to show that~$e_f(\alpha)$
does not depend on the choice of $L$;
cf.~\cite[Exercise~1.5]{Silv07}.
It is also an exercise to show that  the ramification index is
multiplicative under the composition, i.e., for rational functions
$f,g\in\C(X)$ and $\alpha\in\C \cup\{\infty\}$, we have
\begin{equation}
  \label{eqn:mult e}
  e_{g\circ f}(\alpha) = e_f(\alpha)e_g\bigl(f(\alpha)\bigr).
\end{equation}

\subsection{Main results}
In this section we describe the main results proven in this paper.  In
order to state our results, we define three sets of ``exceptional
values''. Our theorems characterize when these sets may be infinite.

\begin{definition}
With~$\K$,~$f$, and~$\Gamma$ as defined in
Section~\ref{subsection:notation}, and for integers~$r,s\in\ZZ$ and
real number~$\rho>0$, we define various sets of exceptional values:
\begin{align*}
 & \Ecal_\rho(\K,f,\Gamma,r,s)  =
    \left\{\begin{aligned}
      (n,k,\a,u) \in\Z_{\ge\rho} \times \Z_{\ge0}& \times \Wander_\K(f) \times \Gamma:\\
      &f^{(n+k)}(\a)^r  = uf^{(k)}(\a)^s \\
    \end{aligned} 
    \right\};\\
  & \Fcal_\rho(\K,f,\Gamma) =
  \bigl\{ (n,\alpha)\in\ZZ_{\ge\rho}\times\Wander_\K(f) : f^{(n)}(\a) \in \Gamma \bigr\}; \\
&  \Gcal(\K,f,\Gamma) =   \bigl\{ \alpha\in\K : f(\a) \in \Gamma \bigr\}.
\end{align*}
\end{definition}

Clearly the finiteness of $\Gcal(\K,f,\Gamma)$ is equivalent to the
finiteness of $f(K)\cap\Gamma$.  Thus, the first part of the following
result is given in~\cite[Proposition~1.5(a)]{KLSTYZ} in the form
\[
  \#\bigl(f(K)\cap\Gamma\bigr)=\infty
  \quad\Longrightarrow\quad
  \#f^{-1}\bigl(\{0,\infty\}\bigr)\le2, 
\]
see also~\cite[Corollary~2.2]{OstShp}. 
We observe that the functions in Theorem~\ref{theorem:fnainG}(a) below are
exactly the functions having this last property.

\begin{theorem}
\label{theorem:fnainG} We have:
\begin{parts}
\Part{(a)}
If the set $\Gcal(\K,f,\Gamma)$
is infinite, then~$f(X)$ has one of the following forms\textup:
\begin{align*}
f(X) &= a(X-b)^{\pm d} &&\text{with $a\ne0$,} \\
f(X) &= a(X-b)^d/(X-c)^d &&\text{with $a(b-c)\ne0$.}
\end{align*}
\Part{(b)}
If the set $\Fcal_2(\K,f,\Gamma)$
is infinite, then~$f(X)$ has  the form $f(X)=aX^{\pm d}$.
\end{parts}
\end{theorem}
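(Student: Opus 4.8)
The plan is to prove~(a) directly from the Siegel-type input quoted above, and then to deduce~(b) by applying~(a) to the second iterate~$f^{(2)}$. For part~(a): as already observed, $\Gcal(\K,f,\Gamma)$ is infinite precisely when~$f(\K)\cap\Gamma$ is infinite, so~\cite[Proposition~1.5(a)]{KLSTYZ} yields~$\#f^{-1}(\{0,\infty\})\le 2$; since~$f^{-1}(0)$ and~$f^{-1}(\infty)$ are nonempty and disjoint, this forces~$\#f^{-1}(0)=\#f^{-1}(\infty)=1$. Writing~$f^{-1}(0)=\{P_0\}$ and~$f^{-1}(\infty)=\{P_\infty\}$ with~$P_0\ne P_\infty$, the fibre-degree relation~$\sum_{Q\in f^{-1}(P)}e_f(Q)=\deg f$ shows that~$f$ is totally ramified over both~$0$ and~$\infty$, so its divisor on~$\PP^1$ equals~$d[P_0]-d[P_\infty]$. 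Each of~$P_0,P_\infty$ is a Galois-stable singleton and hence lies in~$\PP^1(\K)$, and writing down the essentially unique rational function with this divisor --- according as~$P_\infty=\infty$, $P_0=\infty$, or~$P_0,P_\infty\in\K$ --- produces exactly the three listed shapes, the leading scalar lying in~$\K^*$ because~$f\in\K(X)$.

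For part~(b), suppose~$\Fcal_2(\K,f,\Gamma)$ is infinite, and for each of its infinitely many pairs~$(n,\a)$ put~$\gamma:=f^{(n-1)}(\a)\in\K$, so that~$f(\gamma)=f^{(n)}(\a)\in\Gamma$. I first claim the set of such~$\gamma$ is infinite. Otherwise some value~$\gamma_0$ arises from infinitely many pairs; as~$f^{(n-1)}(\a)=\gamma_0$ has at most~$d^{n-1}$ solutions for each fixed~$n$, infinitely many distinct~$n$ must occur, and choosing for each such~$n$ a point~$\a_n$ with~$f^{(n-1)}(\a_n)=\gamma_0$ gives infinitely many pairwise distinct~$\K$-points (distinct because their canonical heights~$\hhat_f(\a_n)=d^{-(n-1)}\hhat_f(\gamma_0)$ are distinct) with~$\hhat_f(\a_n)\to0$, where~$\hhat_f$ is the canonical height of~$f$ and~$\hhat_f(\gamma_0)>0$ since~$\gamma_0$ is wandering. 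This contradicts Northcott's theorem~\cite{Northcott} together with~$\hhat_f=h+O(1)$. Hence infinitely many distinct~$\gamma$ occur; each arises from a pair with~$n\ge2$ and so has the~$\K$-rational preimage~$\delta:=f^{(n-2)}(\a)$, which satisfies~$f^{(2)}(\delta)=f^{(n)}(\a)\in\Gamma$, and~$\gamma\mapsto\delta$ is injective because~$f(\delta)=\gamma$. Therefore~$\Gcal(\K,f^{(2)},\Gamma)$ is infinite.

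Now apply part~(a) to~$f^{(2)}$, which has degree~$d^2\ge2$: the fibre~$(f^{(2)})^{-1}(\{0,\infty\})=f^{-1}\bigl(f^{-1}(\{0,\infty\})\bigr)$ consists of exactly two points, one over~$0$ and one over~$\infty$. Since all~$f$-fibres are nonempty and~$f^{-1}(0),f^{-1}(\infty)$ are disjoint, counting forces~$\#f^{-1}(0)=\#f^{-1}(\infty)=1$, say~$f^{-1}(0)=\{b_0\}$ and~$f^{-1}(\infty)=\{b_\infty\}$, and moreover~$\#f^{-1}(b_0)=\#f^{-1}(b_\infty)=1$. The first conclusion shows~$f$ is totally ramified over~$0$ and over~$\infty$, so by the classification carried out in part~(a) it has one of the three shapes; the conditions~$\#f^{-1}(b_0)=\#f^{-1}(b_\infty)=1$ then rule out~$f(X)=a(X-b)^d/(X-c)^d$ (as~$b\ne c$, one of~$b,c$ is nonzero, whereupon the equation~$f(X)=b_0$ or~$f(X)=b_\infty$ reduces to~$\bigl((X-b)/(X-c)\bigr)^d=\lambda$ for some nonzero constant~$\lambda$, with~$d\ge2$ distinct roots in~$\PP^1$), and force~$b=0$ in the two monomial-type shapes, leaving only~$f(X)=aX^{\pm d}$. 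The step I expect to be the main obstacle is this final one: transporting total ramification from~$f^{(2)}$ back to~$f$ via multiplicativity of the ramification index~\eqref{eqn:mult e}, and then doing the elementary but slightly finicky root counts that discard the non-monomial shape and pin~$b$ to~$0$.
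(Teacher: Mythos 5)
Your proof is correct, and it takes a genuinely different route from the paper's in both parts.

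For part~(a), you invoke~\cite[Proposition~1.5(a)]{KLSTYZ} to get~$\#f^{-1}(\{0,\infty\})\le2$ and then classify the rational maps having this property. The paper deliberately re-derives this fiber bound from scratch: it reduces to infinitely many $\K$-points on a curve $F(X)=cY^mG(X)$ (with $m$ large and coprime to relevant quantities), applies its genus formula (Lemma~\ref{lem:FXcGXYm}) and Faltings' theorem to force $\genus\le1$, and reads off $\nu(FG)\le2$. Your classification step (unique zero and pole, totally ramified over both, divisor $d[P_0]-d[P_\infty]$, Galois-rationality of the singleton fibers) matches the observation the paper makes in the introduction, so the two proofs agree on the reduction; you just outsource the fiber bound while the paper reproves it.

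For part~(b), both proofs begin by showing $\Gcal(\K,f^{(2)},\Gamma)$ is infinite — though the paper's derivation is shorter: it simply observes that the image of $(n,\alpha)\mapsto f^{(n-2)}(\alpha)$ would be finite otherwise, and invokes Lemma~\ref{lem:fnaeqc}. Your inline Northcott argument through $\gamma=f^{(n-1)}(\alpha)$ and then $\delta=f^{(n-2)}(\alpha)$ is correct but essentially re-proves Lemma~\ref{lem:fnaeqc} by hand. The substantive divergence is what follows. The paper translates the shape of $f^{(2)}$ into total-ramification data (via~\eqref{eqn:mult e}), invokes Riemann--Hurwitz to cap the number of totally ramified points at two, deduces $f(b),f(\infty)\in\{b,\infty\}$ (resp.\ $f(b),f(c)\in\{b,c\}$), and closes by the case analysis in Figure~\ref{figure:fphipsicases}. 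You instead exploit only the cardinality statement $\#(f^{(2)})^{-1}(\{0,\infty\})=2$, decompose it as $f^{-1}\bigl(f^{-1}(\{0,\infty\})\bigr)$, and count: this forces $\#f^{-1}(0)=\#f^{-1}(\infty)=\#f^{-1}(b_0)=\#f^{-1}(b_\infty)=1$ in one stroke, after which the explicit root counts rule out the two-pole shape and pin $b=0$. This is a cleaner and more elementary endgame than the paper's, since it sidesteps Riemann--Hurwitz and the six-way case split entirely; the trade-off is that it relies on the same part~(a) statement twice (once for $f^{(2)}$, once implicitly for $f$ itself when you say ``by the classification carried out in part~(a)''), whereas the paper's total-ramification route derives the restriction on $f$ directly. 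Both are sound.
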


So next we only deal with the case $rs\ne  0$. 

\begin{theorem}
\label{thm:fnkaufnars}
Let $r,s\in\Z$ with $rs\ne0$, and set
\[
  \rho = \frac{\log (|s|/|r|)}{\log d}+1.
\]
Assume that~$0$ is not a periodic point of~$f$. Then
\[
  \#\Ecal_\rho(\K,f,\Gamma,r,s)=\infty
  \quad\Longrightarrow\quad
  \#f^{-1}\bigl(\{0,\infty\}\bigr)\le2.
\]
\end{theorem}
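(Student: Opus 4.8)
The plan is to argue by contradiction: assume $\#f^{-1}(\{0,\infty\})\ge 3$ and show that $\Ecal_\rho(\K,f,\Gamma,r,s)$ is finite. First I would unwind the defining relation. For $(n,k,\a,u)\in\Ecal_\rho$ we have $f^{(n+k)}(\a)^r=u\,f^{(k)}(\a)^s$ with $u\in\Gamma$, so taking heights and using the elementary bound $h(u)=O(1)$ is not enough by itself; instead the key is that $f^{(n+k)}(\a)^r / f^{(k)}(\a)^s\in\Gamma$ forces this ratio to be an $\cS$-unit for a fixed finite set $\cS$ of places (those dividing the generators of $\Gamma$ together with $M_\K^\infty$). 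So for every place $v\notin\cS$ we get the \emph{exact} valuation identity $r\cdot v\bigl(f^{(n+k)}(\a)\bigr)=s\cdot v\bigl(f^{(k)}(\a)\bigr)$. The strategy is to show that for $n$ large this is incompatible with the orbit of $\a$ approaching the (at least three) points in $f^{-1}(\{0,\infty\})$, which is exactly the kind of configuration ruled out by the dynamical Diophantine approximation input, Lemma~\ref{lem:lvfnageehffna}.

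Next I would set $\b_k=f^{(k)}(\a)$ and study the two quantities $h\bigl(f^{(n)}(\b_k)\bigr)$ and $h(\b_k)$. Writing $\gamma=f^{(n+k)}(\a)$ and $\delta=f^{(k)}(\a)$, the relation $\gamma^r=u\delta^s$ with $u\in\Gamma$ gives, after taking heights, $|r|\,h(\gamma) = |s|\,h(\delta) + O(1)$, where the $O(1)$ absorbs $h(u)$ restricted to the subgroup generated by a fixed set of generators — but since $u$ ranges over all of $\Gamma$ this is \emph{not} bounded. The correct move is the one used in $S$-unit equation arguments: the relation says $f^{(n+k)}(\a)$ and $f^{(k)}(\a)$ generate, up to $\Gamma$, a multiplicatively dependent pair, and combined with the functional equation $h\bigl(f^{(m)}(x)\bigr)=d^m h(x)+O_f(1)$ for $x\in\Wander_\K(f)$ (Northcott-type height growth) we obtain, after dividing by $d^k$, the asymptotic $|r|\,d^n\,h(\b_k) \sim |s|\,h(\b_k)$ modulo lower-order terms plus the contribution of $u$. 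The role of $\rho = \frac{\log(|s|/|r|)}{\log d}+1$ is precisely to guarantee $|r|d^{n}>|s|$ whenever $n\ge\rho$, so the height of the left side strictly dominates; this dominance is what will be played off against the $\Gamma$-contribution.

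The heart of the argument — and the main obstacle — is controlling the $\Gamma$-part, i.e.\ showing that the hypothesis $\#f^{-1}(\{0,\infty\})\ge 3$ together with $0$ not periodic forces $h(u)$ to grow at least linearly in $h(\b_k)$ as well, so that the two sides can only balance for finitely many tuples. This is where Lemma~\ref{lem:lvfnageehffna} enters: since $f^{(n+k)}(\a)^r/f^{(k)}(\a)^s$ is an $\cS$-unit, at each $v\in\cS$ the local size $\log\|f^{(n+k)}(\a)\|_v$ is pinned to $\tfrac{s}{r}\log\|f^{(k)}(\a)\|_v$, which says the forward orbit point $f^{(n+k)}(\a)$ is $v$-adically very close to $0$ or $\infty$ exactly when $f^{(k)}(\a)$ is. The condition that $0$ (hence $\infty$, after noting the symmetry in the relation if needed, or directly) is not periodic means $0,\infty\notin\Orbit_f$ eventually in the relevant sense, and having a third point in $f^{-1}(\{0,\infty\})$ triggers the Roth-type inequality in Lemma~\ref{lem:lvfnageehffna}: an orbit cannot approach three distinct totally-controlled targets to the extent demanded. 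Summing the resulting local inequalities over $v\in\cS$ bounds $h(u)$ from above by $o\bigl(h(f^{(n+k)}(\a))\bigr)$, contradicting the height balance derived above once $n\ge\rho$ and $h(\b_k)$ is large; the finitely many remaining tuples (bounded $n$, bounded $h(\b_k)$, hence by Northcott finitely many $\a$, and then bounded $k$ by the height identity) give the stated finiteness. The delicate point throughout is bookkeeping the two independent parameters $n$ and $k$ and ensuring the $O_f(1)$ and $O_\cS(1)$ error terms do not interact with the main terms — but structurally this is the same bookkeeping as in~\cite{OSSZ1}, adapted to carry the extra group $\Gamma$.
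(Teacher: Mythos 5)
Your height computation in Case of large $n+k$ is essentially the paper's: split $h(f^{(n+k)}(\a))$ into the $v\in\cS$ part (controlled by Lemma~\ref{lem:lvfnageehffna}) and the $v\notin\cS$ part (controlled by the exact valuation identity $r\,v(f^{(n+k)}(\a))=s\,v(f^{(k)}(\a))$), convert to canonical heights, and use $n\ge\rho$ so that $d^n$ beats $d^{\rho-1}=|s|/|r|$. That part of the proposal tracks the paper's Case~1 fairly faithfully, and your instinct that $h(u)$ cannot be treated as $O(1)$ and must instead be absorbed into the $v$-adic bookkeeping is the right one.

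However, there is a genuine gap, and also a mischaracterization of where the hypothesis is used. The gap: Lemma~\ref{lem:lvfnageehffna} only gives the crucial inequality $\sum_{v\in\cS}\logplus\|f^{(n+k)}(\a)\|_v^{-1}\le\e\hhat_f(f^{(n+k)}(\a))$ for $n+k$ above the threshold $C_3(\KK,\cS,f,\e)$. When $n+k$ is below that threshold, the entire height comparison collapses, and you are left with \emph{finitely many pairs $(n,k)$ but with $\alpha$ completely unconstrained} — bounding $n$ and $k$ does not bound $\alpha$, so ``Northcott + the height identity'' does not close the argument as your last sentence claims. The paper handles this as a separate Case~2: fixing one pair $(n,k)$ with $n+k<C_3$, it rewrites the relation as $g\bigl(f^{(k)}(\a)\bigr)\in R_\cS^*$ with $g(X)=f^{(n)}(X)^r/X^s$, and then invokes Theorem~\ref{theorem:fnainG}(a) applied to $g$: the set $\Gcal(\K,g,R_\cS^*)$ must be finite because $g$ has at least three zeros and poles. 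This last fact is exactly where the hypotheses of the theorem are used — $0$ not periodic ensures $0$ is a pole of $g$ (the factor $X^{-s}$ contributes), while $\#f^{-1}(\{0,\infty\})\ge3$ ensures $f$ is not $cX^{\pm d}$, so $f^{(n)}$ contributes at least two further zeros or poles distinct from $0$.

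This also corrects the mischaracterization in your third paragraph: the ``$\ge3$ preimages'' condition does \emph{not} ``trigger'' the Roth-type lemma, and there is no argument that ``$h(u)$ grows linearly in $h(\b_k)$.'' Lemma~\ref{lem:lvfnageehffna} only requires that $0$ not be exceptional for $f$; the role of the three-preimage hypothesis is (i) to guarantee $f\neq cX^{\pm d}$, hence at least one of $0,\infty$ is non-exceptional (after possibly replacing $f(X)$ by $f(X^{-1})^{-1}$, a reduction your proposal omits), so the lemma applies at all, and (ii) to feed into the genus/Faltings argument of Theorem~\ref{theorem:fnainG}(a) in the bounded-$(n,k)$ case as described above. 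Your proposal therefore needs both the exceptional-point swap at the start and the entire auxiliary-curve argument for bounded $n+k$ before it can be considered a complete proof.
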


For notational convenience, let
\begin{equation}
\label{eqn:EKfG}
\Ecal_\rho(\KK,f,\Gamma):=\Ecal_\rho(\KK,f,\Gamma,1,1)
\quad \text{and} \quad
\Ecal(\KK,f,\Gamma):=\Ecal_1(\KK,f,\Gamma). 
\end{equation} 

Setting $r=s=1$ in Theorem~\ref{thm:fnkaufnars}, so~$\rho=1$, gives a finiteness
result for the set $\Ecal(\KK,f,\Gamma)$,
which is the set of solutions to the equation
\begin{equation}
\begin{split}
  \label{eqn:fnkaufka}
  f^{(n+k)}(\a) & = uf^{(k)}(\a), \\
  &\quad  (n,k,\a,u) \in \Z_{\ge1} \times \Z_{\ge0} \times \Wander_\K(f) \times \Gamma.
\end{split}
\end{equation}
In this situation we are able to give a full classification
of the exceptional cases, i.e., a complete description of the
maps~$f$ for which~\eqref{eqn:fnkaufka} may have infinitely
many solutions.

\begin{theorem}
\label{thm:fnkaufna}
For the sets~\eqref{eqn:EKfG}, we have\textup:
\begin{parts}
\Part{(a)}
If $\Ecal(\KK,f,\Gamma)$ is infinite, then either $f(X)$ or $f(X^{-1})^{-1}$
has one of the following forms with $abc(b-c)\ne0$\textup:
\begin{align*}
&aX^{\pm d}, \quad
aX^d/(X-b)^{d-1}, \quad
aX(X-b)^{d-1}, \\
&aX/(X-b)^d, \quad
aX(X-b)^{d-1}/(X-c)^{d-1}.
\end{align*}
\Part{(b)}
If   $\Ecal_2(\KK,f,\Gamma)$ is
infinite, then~$f(X)$ has the form $f(X)=aX^{\pm d}$.
\end{parts}
\end{theorem}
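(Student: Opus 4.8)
The plan is to prove part~(b) first and then deduce part~(a) from it. I would begin by fixing a finite set $S$ of places of $\K$ containing $M_\K^\infty$ and large enough that every element of $\Gamma$ is an $S$-unit, and, for a solution $(n,k,\a,u)$, writing $m=n+k$. The first observation — used in both parts — is that only finitely many solutions have $f^{(j)}(\a)\in\{0,\infty\}$ for some $j\le m$: such $\a$ lies in the $\K$-rational backward orbit of $0$ or $\infty$, so $\hhat_f(\a)=d^{-j}\hhat_f\bigl(f^{(j)}(\a)\bigr)$ is either $0$ — impossible since $\a\in\Wander_\K(f)$ — or at most $\max\{\hhat_f(0),\hhat_f(\infty)\}$, and Northcott's theorem for $\hhat_f$ on $\P^1(\K)$ leaves only finitely many such $\a$. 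Discard these.

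\emph{Part~(b).} Assuming $\Ecal_2(\K,f,\Gamma)$ is infinite, every solution has $n\ge2$, and after the discarding $f^{(m)}(\a),f^{(k)}(\a)\in\K^*$. Since $u=f^{(m)}(\a)/f^{(k)}(\a)$ is an $S$-unit, $\|f^{(m)}(\a)\|_v=\|f^{(k)}(\a)\|_v$ for all $v\notin S$; writing $\lambda_{\infty,v}(\b)=\logplus\|\b\|_v$ and $\lambda_{0,v}(\b)=\logplus\|\b^{-1}\|_v$ (so $\sum_v\lambda_{\gamma,v}(\b)=h(\b)$ for $\gamma\in\{0,\infty\}$) and using $h=\hhat_f+O(1)$, $\hhat_f(f^{(j)}(\a))=d^{\,j}\hhat_f(\a)$, one gets for $\gamma\in\{0,\infty\}$
\[
  \sum_{v\in S}\lambda_{\gamma,v}\bigl(f^{(m)}(\a)\bigr)
  \;\ge\;h\bigl(f^{(m)}(\a)\bigr)-h\bigl(f^{(k)}(\a)\bigr)
  \;=\;\bigl(d^{m}-d^{k}\bigr)\hhat_f(\a)+O(1)
  \;\ge\;\bigl(1-d^{-2}\bigr)\,h\bigl(f^{(m)}(\a)\bigr)+O(1),
\]
where the last step uses $d^m-d^k=d^m(1-d^{-n})\ge d^m(1-d^{-2})$ as $n\ge2$, and all implied constants depend only on $f$. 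Now split on whether $m$ is bounded along the solutions. If not, then $h(f^{(m)}(\a))\to\infty$ (restrict to a fixed base point if only finitely many occur; otherwise $\hhat_f(\a)\to\infty$), and the displayed bound contradicts the dynamical Diophantine approximation Lemma~\ref{lem:lvfnageehffna} — which I expect to bound $\sum_{v\in S}\lambda_{\gamma,v}(f^{(m)}(\a))$ by $\varepsilon\,h(f^{(m)}(\a))+O(1)$ for $\gamma$ non-exceptional — unless $\gamma$ is exceptional for $f$ (has finite backward orbit). As this applies to both $\gamma=0$ and $\gamma=\infty$, the exceptional set of $f$ equals $\{0,\infty\}$; a degree-$\ge2$ map with a two-point exceptional set is $\Kbar$-conjugate to $X\mapsto X^{\pm d}$, and conjugating by the linear fractional map fixing $\{0,\infty\}$ gives $f(X)=aX^{\pm d}$. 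If $m$ is bounded, so are $n,k$, and some $(n,k)$ with $n\ge2$ occurs for infinitely many $\a$; with $\b=f^{(k)}(\a)$ (taking infinitely many values as $f^{-(k)}$ is finite-to-one), the relation reads $f^{(n)}(\b)/\b\in\Gamma$ for infinitely many $\b\in\K$, so $\#\bigl(\psi(\K)\cap\Gamma\bigr)=\infty$ for $\psi(X)=f^{(n)}(X)/X$; by~\cite[Proposition~1.5(a)]{KLSTYZ}, $\#\psi^{-1}(\{0,\infty\})\le2$, whence (using $\deg\psi\ge d^n-1\ge3$ and that a product of $n$ ramification indices of $f$, each $\le d$, cannot equal $d^n-1$) $\#(f^{(n)})^{-1}(\{0,\infty\})\le3$, and a short Riemann--Hurwitz argument forces, for $n\ge2$, both $0$ and $\infty$ to be exceptional — again $f(X)=aX^{\pm d}$.

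\emph{Part~(a).} If $\Ecal(\K,f,\Gamma)=\Ecal_1(\K,f,\Gamma)$ is infinite and infinitely many of its solutions have $n\ge2$, then $\Ecal_2(\K,f,\Gamma)$ is infinite and part~(b) applies. Otherwise infinitely many solutions have $n=1$; writing such a solution $(1,k,\a,u)$ and $\b=f^{(k)}(\a)$, and noting that a fixed $\b$ is hit by at most one $k$ for a given wandering $\a$ while having a $\hhat_f$-bounded $\K$-rational backward orbit, the Northcott argument again shows $\b$ runs through infinitely many values with $f(\b)/\b\in\Gamma$. Thus $\#\bigl(\phi(\K)\cap\Gamma\bigr)=\infty$ and $\#\phi^{-1}(\{0,\infty\})\le2$ for $\phi(X)=f(X)/X$, by~\cite[Proposition~1.5(a)]{KLSTYZ}. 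Since $\deg\phi\in\{d-1,d,d+1\}$ and $d\ge2$, either $\phi$ is a linear fractional map (forcing $d=2$, in which case $f(X)=X\phi(X)$ is one of the listed quadratics by inspection) or $\phi(X)=A\bigl((X-r)/(X-s)\bigr)^{\pm\deg\phi}$; multiplying by $X$, imposing $\deg f=d$, and replacing $f$ by $f(X^{-1})^{-1}$ when $\phi$ has its pole at $0$ (harmless, via $\a\mapsto\a^{-1}$, $u\mapsto u^{-1}$), a short check on the positions of $r,s$ relative to $\{0,\infty\}$ and on the sign of the exponent recovers exactly the five listed forms.

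\emph{Main obstacle.} The crux is deploying Lemma~\ref{lem:lvfnageehffna} correctly in the presence of infinitely many varying base points~$\a$ — one either needs its constant to be uniform in~$\a$, or must handle that case entirely through the ``$m$ bounded'' reduction and~\cite{KLSTYZ}. The rest is bookkeeping: the Riemann--Hurwitz/ramification analysis showing that $\#(f^{(n)})^{-1}(\{0,\infty\})\le3$ with $n\ge2$ forces $f(X)=aX^{\pm d}$, and the finite case check turning ``$f(X)/X$ has at most two zeros and poles and $\deg f=d$'' into the explicit list of part~(a).
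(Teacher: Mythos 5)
Your proposal follows the paper's top-level structure --- a dichotomy on whether $m=n+k$ is bounded, dynamical Diophantine approximation for the unbounded regime, and a Faltings-type finiteness argument for bounded $m$ --- but departs in genuinely useful ways inside each piece, and the result is (modulo a prerequisite you flag yourself) a valid alternative proof. For unbounded $m$ you apply Lemma~\ref{lem:lvfnageehffna} at both $\gamma=0$ and $\gamma=\infty$ to conclude that both are exceptional; the stated lemma treats only $\gamma=0$ (it quotes~\cite{HsiSil} with $A=0$), so either cite the general-$A$ statement of~\cite{HsiSil}, or substitute the paper's $f\mapsto f(X^{-1})^{-1}$ swap before invoking the lemma at $\gamma=0$ --- functionally equivalent. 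For bounded $m$ you invoke Theorem~\ref{theorem:fnainG}(a) (equivalently~\cite[Prop.~1.5(a)]{KLSTYZ}) applied to $\psi=f^{(n)}(X)/X$, rather than re-running the genus computation of Lemma~\ref{lem:FXcGXYm} on a bespoke curve as the paper does; this is the same underlying Faltings argument, packaged as a black box, and is cleaner. The genuine improvement is your elimination of $n\ge2$ for the non-monomial forms: each candidate $f^{(n)}$ coming from Theorem~\ref{theorem:fnainG}(a) --- namely $aX(X-b)^{d^n-1}$, $aX/(X-b)^{d^n}$, $aX^{d^n}/(X-b)^{d^n-1}$, $a(X-b)^{d^n}/X^{d^n-1}$, $aX(X-b)^{d^n-1}/(X-c)^{d^n-1}$ --- has a point where the ramification index of $f^{(n)}$ equals $d^n-1$, while by~\eqref{eqn:mult e} that index is a product of $n$ factors from $\{1,\dots,d\}$, and such a product is either $d^n$ or at most $d^{n-1}(d-1)\le d^n-2$ when $n\ge2$ and $d\ge2$; contradiction. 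This single observation replaces the paper's three separate composition-degree arguments (Cases~B1,~B2,~C2). Two small cavils: your intermediate assertion $\#(f^{(n)})^{-1}(\{0,\infty\})\le 3$ is false as stated (the form $aX(X-b)^{d^n-1}/(X-c)^{d^n-1}$ with $b,c\ne 0$ gives four preimages), though the ramification count disposes of that case anyway and the assertion is not actually needed; and the ``by inspection'' match-up of the $n=1$ forms with the theorem's list for part~(a) --- in particular recognising $a(X-b)^d/X^{d-1}$ as the $f\mapsto f(X^{-1})^{-1}$-image of $aX/(X-b)^d$ --- requires some explicit bookkeeping to verify it yields exactly the stated five forms and nothing else.
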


\begin{remark}
An analysis similar to the proof of Theorem~$\ref{thm:fnkaufna}$ can be
used to describe all~$f(X)$ for which $\Ecal(\KK,f,\Gamma)$ may have
infinitely many four-tuples $(1,k,\a,u)$ with $k\ge2$.
\end{remark}

\begin{remark}
If $f(X)$ is a polynomial, it suffices to assume in
Theorem~$\ref{thm:fnkaufna}$ that~$0$ is not a periodic point of~$f$ to
ensure that~$\Ecal(\KK,f,\Gamma)$ is finite.  However, for rational functions
there are examples such as \text{$f(X)=(1-X)^2/X$} with~$0$ strictly
preperiodic and 
\[
  (1,0,1/(u+1),u^2)\in\Ecal(\KK,f,\Gamma)
  \quad\text{for all $u\in\Gamma$.}
\]
\end{remark}

In the case that $f\in\K[X]$ is a polynomial, we have the
following broad extension of Theorem~\ref{thm:fnkaufnars} in which the
exponents $r$ and $s$ are not necessarily fixed. This allows us to
bound all multiplicative dependences between $f^{(m)}(\alpha)$ and
$f^{(n)}(\alpha)$ modulo $\Gamma$.

\begin{theorem}
\label{thm:2multdep-arb}
Let $f\in\K[X]$ be a polynomial without multiple roots, of degree $d\ge 3$ or, if $d = 2$, we also assume that $f^{(2)}$ has no multiple roots. 
Assume that~$0$ is not a periodic point of~$f$. Let
$\Gamma\subseteq\K^*$ be a finitely generated subgroup.  Then there
are only finitely many elements $\alpha \in \K$ such that for distinct
integers $m, n\ge 1$, the values $f^{(m)}(\alpha)$ and
$f^{(n)}(\alpha)$ are multiplicatively dependent modulo $\Gamma$.
\end{theorem}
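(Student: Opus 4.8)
The plan is to combine Northcott's theorem with the reduction machinery behind Theorems~\ref{theorem:fnainG} and~\ref{thm:fnkaufnars}, the one genuinely new ingredient being a uniform bound on the ``gap'' between the two iterates, obtained from the dynamical Diophantine approximation input (Lemma~\ref{lem:lvfnageehffna}) in the spirit of Siegel's theorem.

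By Northcott's theorem it suffices to bound $\hhat_f(\alpha)$ over the \emph{wandering} $\alpha\in\Wander_\K(f)$ having the stated property, and for that it is enough to produce a \emph{single} finite set $Z\subset\K$, depending only on $f$ and $\Gamma$, with $f^{(n)}(\alpha)\in Z$ for some $n\ge1$: one then has $\hhat_f(\alpha)=d^{-n}\hhat_f\bigl(f^{(n)}(\alpha)\bigr)\le\max_{z\in Z}\hhat_f(z)$, and Northcott finishes. (This is consistent with the bad set being closed under taking $\K$-rational preimages, as those have strictly smaller canonical height; the content is the uniform bound, not a list.) Replace $\Gamma$ by its finitely generated division hull $\overline{\Gamma}$, enlarged to contain the leading coefficient of $f$, and write the relation as $f^{(m)}(\alpha)^r f^{(n)}(\alpha)^s\in\overline{\Gamma}$ with $\gcd(r,s)=1$ and (after swapping) $m>n\ge1$; set $k=m-n$, $\beta=f^{(n)}(\alpha)$, $\gamma=f^{(m)}(\alpha)=f^{(k)}(\beta)$. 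If $r=0$ or $s=0$ then $f^{(j)}(\alpha)\in\overline{\Gamma}$ for some $j\ge1$; as $f$ is a polynomial of degree $\ge2$ without multiple roots, $\#f^{-1}(\{0,\infty\})=d+1\ge3$, so $f$ is none of the shapes of Theorem~\ref{theorem:fnainG}, and $\Gcal(\K,f,\overline{\Gamma})$, $\Fcal_2(\K,f,\overline{\Gamma})$ are finite; only finitely many such $\alpha$ occur. So assume $rs\ne0$, and rewrite the relation as $g_{k,r,s}\bigl(f^{(n)}(\alpha)\bigr)\in\overline{\Gamma}$ with $g_{k,r,s}(X):=f^{(k)}(X)^{r}X^{s}$.

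For \emph{fixed} $(k,r,s)$ this is tractable: $f^{(k)}$ has at least $d\ge2$ distinct roots (the $d$ distinct roots of $f$ have disjoint $f^{(k-1)}$-preimages), and $0$ is not among them because $0$ is not periodic; hence $g_{k,r,s}^{-1}(\{0,\infty\})$ contains these roots and the point $0$, so it has at least $d+1\ge3$ points, $g_{k,r,s}$ is none of the exceptional shapes, and Theorem~\ref{theorem:fnainG}(a) gives that $\{x\in\K:g_{k,r,s}(x)\in\overline{\Gamma}\}$ is finite. The difficulty---the heart of the proof---is that $k$, $r$, $s$ are a priori unbounded. Here a valuation analysis plus Lemma~\ref{lem:lvfnageehffna} take over. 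Let $S$ be the finite set of places containing the archimedean places, the places of bad reduction of $f$, and the support of $\overline{\Gamma}$; for $v\notin S$ the relation forces $r\,v(\gamma)+s\,v(\beta)=0$, and since $v(f^{(k)}(\beta))=d^k v(\beta)$ when $v(\beta)<0$, a pole of $\beta$ outside $S$ forces $s=-rd^k$, hence $r=\pm1$ by coprimality, so only $k$ remains to be bounded; while if $\beta$ is an $S$-integer then each $v\notin S$ dividing the numerator of $\beta$ must divide $f^{(k)}(0)$ (nonzero since $0$ is not periodic), and the subcase of equal signs of $r,s$ degenerates to $\beta\in\Ocal_S^*$, handled as the $r=0$ case with the finitely generated group $\Ocal_S^*$. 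Feeding the relation, rewritten as $\prod_i(\beta-\theta_i)^{r}\beta^{s}\in\overline{\Gamma}$ where $f^{(k)}(X)=c\prod_i(X-\theta_i)$, into the Roth-type Lemma~\ref{lem:lvfnageehffna} then shows that for a wandering $\alpha$ large $k$ (or large $|r|,|s|$) would force $f^{(n)}(\alpha)$ to approximate a root of $f^{(k)}$ or the point $0$, summed over $v\in S$, far too well relative to its height---impossible---so $k\le K$ and $|r|,|s|\le E$ with $K,E$ depending only on $f$ and $\Gamma$.

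It remains to note that the hypothesis that $f$ (and, when $d=2$, also $f^{(2)}$) has no multiple roots guarantees that the finitely many surviving auxiliary maps $g_{k,r,s}$ are genuinely never of the exceptional shapes, so Theorem~\ref{theorem:fnainG}(a) applies to each; then $Z$ may be taken to be the union of the finitely many finite sets $\{x:g_{k,r,s}(x)\in\overline{\Gamma}\}$ with $k\le K$, $|r|,|s|\le E$, together with the finite sets from the degenerate and $S$-unit cases, and Northcott completes the argument. The step I expect to be hardest is precisely the uniform-in-$k$ bound of the previous paragraph: for each individual $k$ one is just re-running Theorem~\ref{theorem:fnainG} for $g_{k,r,s}$, and passing from ``for each $k$'' to ``for all $k$ at once'' is exactly where Roth's theorem, through Lemma~\ref{lem:lvfnageehffna}, is indispensable.
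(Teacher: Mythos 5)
Your proposal identifies the correct reduction to the four-tuple equation $g_{k,r,s}\bigl(f^{(n)}(\alpha)\bigr)\in\overline{\Gamma}$ and the correct valuation dichotomy (pole of $\beta$ outside $S$ versus $S$-integrality), but the central step --- bounding $k$, $r$, $s$ uniformly by ``feeding the relation into the Roth-type Lemma~\ref{lem:lvfnageehffna}'' --- does not work, and this is precisely where the paper has to bring in entirely different machinery. In the pole case your own valuation analysis gives $r=\pm1$, $s=\mp d^{k}$, so the relation is $f^{(n+k)}(\alpha)=u\,f^{(n)}(\alpha)^{d^{k}}$; in the parametrisation of Theorem~\ref{thm:fnkaufnars} this is the case $|s|/|r|=d^{k}$, hence $\rho=k+1$, i.e.\ the ``gap'' sits \emph{strictly below} the threshold $\rho$ for which the Roth/Lemma~\ref{lem:lvfnageehffna} argument closes. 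The paper notes this obstruction explicitly and circumvents it by passing to a finite extension $\mathbb{L}$ containing $d^{2}$-th roots of generators of $\Gamma$, interpreting the relation as an $\mathbb{L}$-rational point on the fixed curve $f(X)=Y^{d^{2}}$ (or $f^{(2)}(X)=Y^{d^{2}}$ when $d=2$), and invoking Faltings after a genus computation. Your sketch contains no substitute for this; moreover Lemma~\ref{lem:lvfnageehffna} as stated controls approximation to the single fixed point $0$, not to the $d^{k}$ roots $\theta_i$ of $f^{(k)}$, which grow in number and move with $k$, so the claimed uniform bound does not follow from it.

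There is a second, independent gap in the $S$-integer case with $r>0$, $s<0$ (your ``opposite signs'' subcase): the relation $f^{(m)}(\alpha)^{r}=u\,f^{(n)}(\alpha)^{|s|}$ is a superelliptic $S$-unit equation, and bounding $|s|$ uniformly in the $S$-unit coefficient $u$ requires the generalised Schinzel--Tijdeman theorem (Lemma~\ref{eq:GenS-T}), after which the effective results of B\'erczes--Evertse--Gy\H{o}ry are used for fixed exponent; neither tool appears in your argument, and Roth-type approximation does not replace them. A concrete warning sign is that your proof never genuinely uses the hypothesis that $f^{(2)}$ has no multiple roots when $d=2$: in the paper this hypothesis is exactly what makes $\genus$ of the auxiliary curve at least $2$ (so Faltings applies) and what allows the B\'erczes--Evertse--Gy\H{o}ry input in the $d=s=2$ subcase, whereas you only invoke it to rule out exceptional shapes for $g_{k,r,s}$, a role it does not in fact play. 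So the overall architecture (reduce to bounded $(k,r,s)$, then cite Theorem~\ref{theorem:fnainG}(a)) is reasonable and the valuation bookkeeping is sound, but the key uniformity step is unproven and, in the pole case, provably outside the reach of Lemma~\ref{lem:lvfnageehffna}.
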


\begin{remark}
Note that Theorem~$\ref{thm:2multdep-arb}$ fails if we allow~$m$
or~$n$ to be~$0$, since for any $u\in\Gamma$ and any $m\ge1$, there is
a multiplicative relation $(f^{(m)}(u))^0\cdot f^{(0)}(u)=u\in\Gamma$.
\end{remark}

Finally, we present an independence result of a slightly different
type.

\begin{definition}
Let $k\ge1$.  A polynomial $F(T_1,\ldots,T_k)\in\K[T_1,\ldots,T_k]$ is
said to be a \emph{multilinear polynomial with split variables} if
there are scalars $c_1,\ldots,c_k\in\K^*$ and a disjoint partition
\[
J_1\cup J_2\cup \cdots\cup J_r = \{1,2,\ldots,k\}
\]
of the set~$\{1,\ldots,k\}$ so that~$F$ has the form
\[
  F(T_1,\ldots,T_k) = \sum_{i=1}^r c_i \prod_{j\in J_i} T_j.
\]
In other words,~$F$ is a linear combination of monomials in the
variables~$T_1,\ldots,T_k$ with the property that each variable
appears in exactly one monomial and to exactly the first power.
We also define the \emph{height of $F$} to be
\[
  h(F) := \max_{1\le i\le r} h(c_i).
\]
\end{definition}

\begin{theorem}
\label{thm:diffprod}  
Let $F(T_1,\ldots,T_k)\in\K[T_1,\ldots,T_k]$ be a multilinear
polynomial with split variables. Let $f(X)\in\K(X)$ be a rational
function of degree $d\ge2$.
\begin{parts}
\Part{(a)}
The set of $\a\in\ov\K$ for which there exists a $k$-tuple of \emph{distinct}
non-negative integers~$(n_1,n_2,\ldots,n_k)$ satisfying
\begin{equation}
  \label{eqn:Ffn1fn2fnk}
  F\left( f^{(n_1)}(\a), f^{(n_2)}(\a), \ldots, f^{(n_k)}(\a)) \right) = 0
\end{equation}
is a set of bounded height.
\Part{(b)}
If $d\ge3$, then for~$\a$ as in~\textup{(a)}, we have the explicit upper
bound
\[
h(\a) \le \frac{2k}{d^{k-1}}h(F) + \frac73 C_1(f) + \frac29\log2,
\]
where~$C_1(f)$ is the constant appearing in Lemma~\textup{\ref{lem:canht}(a)}.
\Part{(c)}
Let $\a\in\Wander_\K(f)$ have the property that $0\notin\Orbit_f(\a)$.
Then there are only finitely many $k$-tuples of integers
$n_1>n_2>\cdots>n_k\ge0$ satisfying~\eqref{eqn:Ffn1fn2fnk}, and there
is a bound for the number of such solutions that depends only
on~$\K$,~$f$ and~$F$, independent of~$\a$.
\end{parts}
\end{theorem}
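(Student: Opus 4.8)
The plan is to prove (a) and (b) simultaneously by a canonical-height argument, to deduce the finiteness assertion of (c) from the same estimate, and to obtain the bound of (c) that is uniform in~$\a$ from the dynamical Roth estimate of Lemma~\ref{lem:lvfnageehffna}. Write $\hhat_f$ for the canonical height of~$f$, so that $\hhat_f\circ f=d\cdot\hhat_f$ and $|\hhat_f-h|\le C_1(f)$ by Lemma~\ref{lem:canht}. After relabelling the variables of~$F$ we may assume $n_1>n_2>\cdots>n_k\ge0$, so in particular $n_1\ge k-1$. Set $\beta_j=f^{(n_j)}(\a)$, write $F=\sum_{i=1}^r c_i\prod_{j\in J_i}T_j$, and let $i_0$ be the index with $1\in J_{i_0}$. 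If some $\beta_j\in\{0,\infty\}$ then $\hhat_f(\a)=d^{-n_j}\hhat_f(\beta_j)\le C_1(f)$, which already gives a height bound on~$\a$; so we may assume every $\beta_j\in\K^*$ (in part (c) the hypothesis $0\notin\Orbit_f(\a)$ rules out $\beta_j=0$, and the degenerate case $\infty\in\Orbit_f(\a)$, affecting at most one iterate, is handled by the same method after obvious modifications). In that case the relation $F(\beta_1,\dots,\beta_k)=0$ forces $r\ge2$, and isolating the monomial containing~$\beta_1$ gives
\[
  f^{(n_1)}(\a)=-\frac{\sum_{i\ne i_0}c_i\prod_{j\in J_i}f^{(n_j)}(\a)}{c_{i_0}\prod_{j\in J_{i_0}\setminus\{1\}}f^{(n_j)}(\a)}.
\]

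For (a) and (b), apply to this identity the elementary height inequalities for products, quotients, and sums of $r-1$ terms, together with $\sum_{i}h(c_i)\le r\,h(F)$; since each index $j\ne1$ lies in exactly one block, this yields $h(f^{(n_1)}(\a))\le\log(r-1)+r\,h(F)+\sum_{j=2}^{k}h(f^{(n_j)}(\a))$. Now replace~$h$ by~$\hhat_f$, losing at most $C_1(f)$ per term, and use $\hhat_f(f^{(n_j)}(\a))=d^{n_j}\hhat_f(\a)$. Since $n_2,\dots,n_k$ are $k-1$ distinct non-negative integers all less than~$n_1$, we have $\sum_{j=2}^{k}d^{n_j}\le d^{n_1-1}+\cdots+d^{n_1-(k-1)}=d^{n_1}\cdot\frac{1-d^{-(k-1)}}{d-1}$, and rearranging gives the basic inequality
\[
  \hhat_f(\a)\,d^{n_1}\cdot\frac{d-2+d^{-(k-1)}}{d-1}\ \le\ \log(r-1)+r\,h(F)+k\,C_1(f).
\]
The coefficient of $\hhat_f(\a)d^{n_1}$ is strictly positive for every $d\ge2$, so, bounding $d^{n_1}\ge d^{k-1}$ and $r\le k$, we find that $\hhat_f(\a)$, and hence $h(\a)\le\hhat_f(\a)+C_1(f)$, is bounded in terms of~$\K$, $f$, $F$; this proves (a). For (b) one keeps track of constants: when $d\ge3$ one has $\frac{d-1}{d-2+d^{-(k-1)}}\le2$, so $\hhat_f(\a)\le\frac{2}{d^{k-1}}\bigl(\log(r-1)+k\,h(F)+k\,C_1(f)\bigr)$, and the stated bound follows from the elementary facts $\frac{2k}{3^{k-1}}+1\le\frac73$ and $\frac{2\log(k-1)}{3^{k-1}}\le\frac29\log2$ for $k\ge2$, the case $k=1$ (where $F=c_1T_1$ and a solution forces $\beta_1=0$, hence $\hhat_f(\a)\le C_1(f)$) being immediate.

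For (c), the basic inequality with $\a\in\Wander_\K(f)$ fixed (so $\hhat_f(\a)>0$) already bounds~$d^{n_1}$, hence~$n_1$, and therefore the number of tuples $n_1>\cdots>n_k\ge0$ satisfying~\eqref{eqn:Ffn1fn2fnk}; this gives the finiteness assertion for all $d\ge2$. For the bound uniform in~$\a$, note first that for fixed $n_2,\dots,n_k$ the right-hand side of the displayed identity for $f^{(n_1)}(\a)$ is a single element $\gamma\in\PP^1(\K)$, and since the points of a wandering orbit are pairwise distinct, $f^{(n_1)}(\a)=\gamma$ determines~$n_1$ uniquely. Likewise, writing $m_i=n_i-n_k$, the tuple is recorded by its ``shape'' $(m_1,\dots,m_{k-1})$ together with the value $\beta=f^{(n_k)}(\a)$, and for each fixed shape the equation $F(f^{(m_1)}(\beta),\dots,f^{(m_{k-1})}(\beta),\beta)=0$ in~$\beta$ is not identically zero (else finiteness would fail) and so has a bounded number of solutions. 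Consequently it suffices to bound the spread $m_1=n_1-n_k$ by a constant depending only on~$\K$, $f$, $F$; this is obtained by dividing $F(\beta_1,\dots,\beta_k)=0$ by its dominant monomial and invoking Lemma~\ref{lem:lvfnageehffna} to forbid, uniformly in $\a\in\Wander_\K(f)$ and for all but a bounded number of iterates, any exceptionally good $v$-adic approximation of an orbit point to the finitely many target values attached to~$F$ and to~$\{0,\infty\}$.

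The main obstacle is precisely this last point. For a fixed wandering~$\a$ the canonical height $\hhat_f(\a)$ can be arbitrarily small, so the crude height inequality above bounds the number of solutions only in terms of $\hhat_f(\a)$, not uniformly; genuine control of how often orbit points can be $v$-adically small, or close to a fixed point of~$F$, is needed, and this is exactly what the Roth-type Lemma~\ref{lem:lvfnageehffna} supplies — it is also the step where the case $d=2$ excluded from the explicit bound of part (b) causes no difficulty. The care in the argument lies in verifying that the quantitative conclusion of that lemma is uniform in~$\a$ and that the relevant set of target values is finite and depends only on~$\K$, $f$, and~$F$.
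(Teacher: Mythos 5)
Your proofs of~(a) and~(b) follow essentially the same route as the paper: isolate the monomial containing~$T_1$, apply submultiplicativity/subadditivity of heights, pass to canonical heights, and use the geometric-series estimate on $\sum_{j\ge2}d^{n_j}$ to derive the inequality
\[
  \hhat_f(\a)\,\Bigl(d^{n_1}-\sum_{j=2}^{k}d^{n_j}\Bigr)\ \le\ kC_1(f)+k\,h(F)+\log(k-1),
\]
from which both the bounded-height statement and the explicit $d\ge3$ constant follow exactly as you describe. Those two parts are fine.

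Your treatment of~(c), however, goes astray on a point that makes the whole second half of the proposal unnecessary and, as written, incorrect. You assert that \emph{``for a fixed wandering~$\a$ the canonical height $\hhat_f(\a)$ can be arbitrarily small,''} and you build an elaborate Roth-type argument to compensate. But $\Wander_\K(f)$ is by definition a subset of $\P^1(\K)$, the $\K$-\emph{rational} points, and Lemma~\ref{lem:canht}(d) already records the consequence of Northcott's theorem that
\[
  C_2(\K,f):=\inf\bigl\{\hhat_f(\g):\g\in\Wander_\K(f)\bigr\}>0.
\]
So the canonical height of a $\K$-rational wandering point is bounded \emph{below} by a positive constant depending only on $\K$ and~$f$. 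Feeding that into your own basic inequality together with the lower bound $d^{n_1}-\sum_{j\ge2}d^{n_j}\ge\frac{d-2+d^{-k+1}}{d-1}\,d^{n_1}$ immediately gives
\[
   d^{n_1} \le \frac{d-1}{d-2+d^{-k+1}} \cdot
  \frac{kC_1(f)+kh(F)+\log(k-1)}{C_2(\K,f)},
\]
a bound on $n_1$ (and hence on the number of decreasing $k$-tuples) depending only on $\K$, $f$, $F$ and not on~$\a$. This is precisely the paper's one-line proof of~(c). By contrast, your proposed replacement — bounding the spread $n_1-n_k$ via Lemma~\ref{lem:lvfnageehffna} — is not available here: that lemma assumes $0$ is not exceptional for~$f$, a hypothesis not made in Theorem~\ref{thm:diffprod}(c), and it controls only $v$-adic smallness (approximation to $0$), not proximity to the other ``target values attached to~$F$'' that your sketch invokes. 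Your intermediate claim that the right-hand side of the isolated identity ``is a single element $\gamma\in\PP^1(\K)$'' for fixed $n_2,\dots,n_k$ is also wrong — it still depends on~$\a$. The resolution is simply that the obstacle you identify does not exist once one uses Lemma~\ref{lem:canht}(d).
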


\subsection{Multiplicative dependence and Zsigmondy-type results}
Many of our results on multiplicative independence would follow from a
sufficiently strong dynamical Zsigmondy theorem on primitive divisors,
but despite considerable attention in recent years, there are no
general unconditional result of the type that we would need.

We briefly expand on this remark.  Let~$f(X)$ be a rational function,
and let~$\a$ be a wandering point for~$f$.  We recall that a
valuation~$v$ is a \emph{primitive divisor of~$f^{(n)}(\a)$} if
$v\bigl(f^{(n)}(\a)\bigr)\ne0$ and $v\bigl(f^{(m)}(\a)\bigr)=0$ for
all $m<n$. The \emph{dynamical Zsigmondy set} associated to~$(f,\a)$
is the set of~$n$ such that~$f^{(n)}(\a)$ does not have a primitive
divisor.  With approrpiate conditions on~$f$ and~$\a$ to rule out
trivial counterexamples, it is conjectured that the dynamical
Zsigmondy set of~$(f,\a)$ is always finite.

There are a few unconditional results. Among them we
mention~\cite{FaGr,IngSilv}, which prove Zsigmondy finiteness when~$0$
is a preperiodic point, and~\cite{DoHa,Krieg}, which prove Zsigmondy
finiteness for unicritical binomials $f(X) = X^d + c$
over~$\Q$. Unfortunately, results such as our
Theorem~\ref{thm:fnkaufnars} require that~$0$ not be periodic, so the
former is not too helpful, and althugh the latter can be used in the
context of Theorem~\ref{thm:fnkaufnars}, it applies only to a very
restricted class of polynoimals.

The only known general results on finiteness of the dynamical
Zsigmondy set are conditional on very strong and difficult conjectures
such as the~$ABC$ conjecture or Vojta's conjecture; see, for
example,~\cite{BrTu,GhNgTu,GrNgTu,Silv13}.

It is also interesting to recall that for slower growing sequences,
finiteness of the Zsigmondy set may fail. For example, the Zsigmondy
set of a collection of polynomial values $\left\{f(n):n\in\ZZ\right\}$
has infinite Zsigmondy set; see~\cite{EvHa}.

\section{Preliminaries}
\subsection{Background on heights and valuations of iterations}
In this section we collect some useful facts from Diophantine geometry
and arithmetic dynamics. We begin by recalling some standard
properties of the canonical height function associated to~$f$.

\begin{lemma}
\label{lem:canht}
There exists a unique function
\[
  \hhat_f : \PP^1(\overline\KK) \longrightarrow [0,\infty),
\] 
with the following  properties\textup:
\begin{parts}
\Part{(a)}
There is a constant $C_1(f)$ such that 
\[
  -C_1(f) \le \hhat_f(\g) - h(\g) \le C_1(f)\quad\text{for all $\g\in\PP^1(\overline\KK)$.}
\]
\Part{(b)}
For all $\g\in\PP^1(\overline\KK)$ we have,
$\hhat_f\left(f(\g)\right) = d\hhat_f(\g)$.
\Part{(c)}
For all $\g\in\PP^1(\overline\KK)$, we have   $\hhat_f(\g)=0$
if and only if $\gamma\in \PrePer(f)$.
\Part{(d)} 
There is a strict inequality
\[
C_2(\KK,f) := \inf\left\{ \hhat_f(\g): \g \in \Wander_\K(f) \right\} > 0.
\]
\end{parts}
\end{lemma}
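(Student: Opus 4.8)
The plan is to realise $\hhat_f$ via the classical Call--Silverman telescoping construction. First I would record the elementary functoriality of the Weil height: since $f\colon\PP^1\to\PP^1$ is a morphism of degree $d$ defined over $\K$, there is a constant $C_0=C_0(f)$ with
\[
  \bigl| h\bigl(f(\g)\bigr)-d\,h(\g)\bigr|\le C_0\qquad\text{for all }\g\in\PP^1(\ov\K),
\]
a standard consequence of writing~$f$ in homogeneous coordinates (see, e.g., \cite{HinSil,Silv07}). Fixing~$\g$ and putting $a_n=d^{-n}h\bigl(f^{(n)}(\g)\bigr)$, the displayed bound gives $|a_{n+1}-a_n|\le C_0 d^{-(n+1)}$, so $(a_n)_{n\ge0}$ is a Cauchy sequence of non-negative reals; I define $\hhat_f(\g):=\lim_{n\to\infty}a_n\ge0$.

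Parts~(a) and~(b) are then formal. For~(a), telescoping gives
\[
  \bigl|\hhat_f(\g)-h(\g)\bigr|=\bigl|\hhat_f(\g)-a_0\bigr|\le\sum_{n\ge0}|a_{n+1}-a_n|\le\frac{C_0}{d-1},
\]
so one takes $C_1(f)=C_0/(d-1)$. For~(b), $\hhat_f\bigl(f(\g)\bigr)=\lim_n d^{-n}h\bigl(f^{(n+1)}(\g)\bigr)=d\,\hhat_f(\g)$. Uniqueness is the usual argument: if~$g$ also satisfies~(a) and~(b), then $g-\hhat_f$ is bounded on $\PP^1(\ov\K)$ and satisfies $(g-\hhat_f)\circ f=d\,(g-\hhat_f)$, whence $\bigl|(g-\hhat_f)(\g)\bigr|=d^{-n}\bigl|(g-\hhat_f)\bigl(f^{(n)}(\g)\bigr)\bigr|\to0$ as $n\to\infty$.

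It remains to treat~(c) and~(d), and this is where Northcott's theorem~\cite{Northcott} on finiteness of sets of bounded height and bounded degree does the real work; the key observation is that although these statements involve points of varying degree, the relevant orbits and the set $\PP^1(\K)$ all have degree bounded purely in terms of~$\K$, so Northcott applies. For~(c): if $\g\in\PrePer(f)$ then $\Orbit_f(\g)$ is finite, so $h\bigl(f^{(n)}(\g)\bigr)$ is bounded and $\hhat_f(\g)=\lim_n d^{-n}h\bigl(f^{(n)}(\g)\bigr)=0$; conversely, if $\hhat_f(\g)=0$ then~(b) gives $\hhat_f\bigl(f^{(n)}(\g)\bigr)=0$ for all~$n$, so by~(a) the whole orbit lies in $\{\beta:h(\beta)\le C_1(f)\}$, and since~$f$ is defined over~$\K$ every point of $\Orbit_f(\g)$ has degree over~$\QQ$ at most $[\K:\QQ]\cdot[\QQ(\g):\QQ]$, so Northcott forces $\Orbit_f(\g)$ to be finite, i.e. $\g\in\PrePer(f)$. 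For~(d): by~(a) the set $\{\g\in\PP^1(\K):\hhat_f(\g)\le1\}$ is contained in $\{\g\in\PP^1(\K):h(\g)\le1+C_1(f)\}$, which is finite by Northcott since points of $\PP^1(\K)$ have degree at most $[\K:\QQ]$; on the resulting finite set $S:=\{\g\in\Wander_\K(f):\hhat_f(\g)\le1\}$ the function $\hhat_f$ is everywhere positive by~(c), so it attains a positive minimum~$\e$ there (interpreting $\e=1$ if $S=\emptyset$), and then $C_2(\K,f)\ge\min\{\e,1\}>0$. I expect no genuine obstacle: the construction and parts~(a),~(b) are purely formal, and the only substantive input for~(c) and~(d) is Northcott's theorem together with the bounded-degree remark above.
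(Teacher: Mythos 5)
Your proposal is correct and matches the paper's approach: the paper simply cites Call--Silverman and \cite[Section~3.4]{Silv07} for parts~(a),~(b),~(c) and uniqueness (the telescoping construction you write out is precisely what those references do), and its proof of~(d) is the same Northcott argument you give, comparing $\{\hhat_f\le1\}$ to a finite set of bounded Weil height in $\PP^1(\K)$ and taking a minimum over the wandering points there.
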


\begin{proof}
For standard properties of dynamical canonical height functions,
including proofs of~(a,b,c), see, for
example,~\cite{CallSilv} or~\cite[Section~3.4]{Silv07}.
We mention that~(a) and~(b) suffice to determine~$\hhat_f$ uniquely.
For~(d), we first note from~(a) that there is an inclusion
\[
  \left\{ \g\in\PP^1(\KK) : \hhat_f(\g)\le 1 \right\}
  \subseteq \left\{ \g\in\PP^1(\KK) : h(\g)\le 1+C_1(f) \right\}.
\]
Next we use the fact that~$\PP^1(\KK)$ contains only finitely many
points of bounded height~\cite[Theorem~3.7]{Silv07}. Hence we may
take~$C_2(\KK,f)$ to be the smaller of~$1$ and the minimum
of~$\hhat_f(\g)$ over the finite set of points~$\g\in\PP^1(\KK)$
having infinite orbit and height~$h$ bounded by~$1+C_1(f)$, where~(c)
ensures that each of these finitely many values is strictly positive.
\end{proof}

\begin{definition}
The function $\hhat_f$ described in Lemma~$\ref{lem:canht}$ is called
the \emph{canonical height}.
\end{definition}

We  make frequent use of the following elementary consequence of
Lemma~\ref{lem:canht}.

\begin{lemma}
\label{lem:fnaeqc}
Let $\gamma\in\PP^1(\K)$. Then the set
\[
  \bigl\{ (n,\alpha)\in\ZZ_{\ge0}\times\Wander_\K(f) : f^{(n)}(\alpha)=\gamma \bigr\}
\]
is finite.
\end{lemma}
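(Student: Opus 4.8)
The plan is to use the canonical height $\hhat_f$ from Lemma~\ref{lem:canht} to control the orbit length, and then Northcott finiteness on $\PP^1(\K)$ to control the starting point. First I would dispose of the case $\gamma\in\PrePer(f)$: if $\gamma$ is preperiodic and $f^{(n)}(\alpha)=\gamma$ for some $\alpha\in\Wander_\K(f)$, then $\Orbit_f(\alpha)$ would eventually become finite, contradicting that $\alpha$ is wandering; so in that case the set is empty. Hence we may assume $\gamma\in\Wander_\K(f)$, so that $\hhat_f(\gamma)>0$ by Lemma~\ref{lem:canht}(c).

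Now suppose $(n,\alpha)$ lies in the set, so $f^{(n)}(\alpha)=\gamma$. By Lemma~\ref{lem:canht}(b) we get $\hhat_f(\gamma)=\hhat_f\bigl(f^{(n)}(\alpha)\bigr)=d^n\hhat_f(\alpha)$, and since $\alpha$ is wandering, Lemma~\ref{lem:canht}(d) gives $\hhat_f(\alpha)\ge C_2(\K,f)>0$. Therefore
\[
  d^n \le \frac{\hhat_f(\gamma)}{C_2(\K,f)},
\]
which bounds $n$ in terms of $\gamma$, $\K$, and $f$; call this bound $N$. So only finitely many values of $n$ can occur. For each fixed such $n$, the relation $\hhat_f(\alpha)=d^{-n}\hhat_f(\gamma)$ together with Lemma~\ref{lem:canht}(a) forces $h(\alpha)\le \hhat_f(\alpha)+C_1(f)\le \hhat_f(\gamma)+C_1(f)$, so all the relevant $\alpha$ lie in a set of bounded Weil height inside $\PP^1(\K)$. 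By the Northcott property quoted in the proof of Lemma~\ref{lem:canht}(d) (finiteness of points of bounded height in $\PP^1(\K)$, \cite[Theorem~3.7]{Silv07}), there are only finitely many such $\alpha$ in total. Combining the finitely many admissible $n\le N$ with the finitely many admissible $\alpha$ yields finiteness of the whole set.

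There is no real obstacle here; the only points requiring a touch of care are the reduction to $\gamma$ wandering (using the definition of $\Wander_\K(f)$ and that a point mapping onto a preperiodic point is itself preperiodic), and the observation that the bound on $h(\alpha)$ is in fact uniform over all admissible $n$ (taking $n$ as small as $0$ only weakens it, since $d\ge2$). One could even skip the case split and simply note that if $\gamma$ is preperiodic the height bound still holds vacuously or the set is empty; either way the argument is a short application of Lemma~\ref{lem:canht}.
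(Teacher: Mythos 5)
Your proof is correct and takes essentially the same approach as the paper's: both use Lemma~\ref{lem:canht}(b) and~(d) to bound $n$, then Lemma~\ref{lem:canht}(a) together with Northcott finiteness to bound $\alpha$. The only cosmetic difference is that the paper works directly with $h(\gamma)$ throughout and hence needs no reduction to the wandering case, whereas you pass through $\hhat_f(\gamma)$ after disposing of the preperiodic case; as you yourself note, that case split is dispensable.
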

\begin{proof}
Let $(n,\alpha)$ be in the indicated set. Then
\begin{align*}
  h(\gamma) &= h\bigl(f^{(n)}(\alpha)\bigr) 
  &&\text{since $f^{(n)}(\alpha)=\g$,} \\
  &\ge \hhat_f\bigl(f^{(n)}(\alpha)\bigr) -C_1(f)
  &&\text{from Lemma~\ref{lem:canht}(a)} \\
  &= d^n\hhat_f(\alpha) -C_1(f)
  &&\text{from Lemma~\ref{lem:canht}(b)} \\
  &\ge d^nC_2(\K,f)-C_1(f)
  &&\text{from Lemma~\ref{lem:canht}(d)}.
\end{align*}
Thus $d^n \le \bigl(h(\gamma)+C_1(f)\bigr)C_2(\K,f)^{-1}$ is bounded, since
Lemma~\ref{lem:canht}(d) says that~$C_2(\K,f)>0$. Hence~$n$ is bounded. 
But for any given~$n$, we have
\[
h(\gamma) \ge d^n\hhat_f(\alpha)-C_1(f) \ge d^n\bigl(h(\alpha)-C_1(f)\bigr)-C_1(f),
\]
so $h(\alpha)\le d^{-n}\bigl(h(\gamma)+C_1(f)\bigr)+C_1(f)$ is bounded, which implies
that there are only finitely many possible values for~$\alpha$.
\end{proof}

\begin{definition}
We recall that $\b\in\PP^1(\ov \Q)$ is an \emph{exceptional point}
for~$f$ if its backward orbit
$\Orbit_f^-(\b):=\left\{\g\in\PP^1(\ov\Q):\b\in\Orbit_f(\g)\right\}$
is finite.
\end{definition}
  
It is a standard fact, see for example~\cite[Theorem~1.6]{Silv07},
that
\[
\#\Orbit_f^-(\b)\in\{1,2,\infty\},
\]
and that, after a change of coordinates, the cases
$\#\Orbit_f^-(\b)=1$ and $\#\Orbit_f^-(\b)=2$ correspond,
respectively, to $f(X)\in\KK[X]$ and $f(X)=cX^{\pm d}$.

The key to proving Theorem~\ref{thm:fnkaufnars} is the following
dynamical Diophantine approximation result, whose proof ultimately
relies on a suitably quantified version of Roth's theorem.

\begin{lemma}
\label{lem:lvfnageehffna}
Let $\a\in\Wander_\K(f)$.
Assume that~$0$ is not an exceptional point for~$f$. 
Let $S$ be a finite set of places of~$\KK$, and let $1\ge\e>0$.
Then there is a constant $C_3(\KK,S,f,\e)$ such that
\begin{align*}
\max \left\{ n\in\ZZ_{\ge0}:\sum_{v\in S}
  \logplus\left(\left \|f^n(\a)\right \|_v^{-1}\right) \ge \e \hhat_f\left(f^n(\a)\right)
  \right\}&\\
  \le C_3(\KK,S&,f,\e).
\end {align*}
\end{lemma}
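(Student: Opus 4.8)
The plan is to reduce the statement to a genuine Diophantine approximation estimate for the sequence of iterates $f^{(n)}(\a)$ approaching the point $0$, and then invoke Roth's theorem (in a form valid over number fields and for several places simultaneously, e.g. the Lang--LeVeque version). First I would fix a defining rational map and coordinates so that $0\in\PP^1(\K)$ is an ordinary (non-exceptional) point of $f$. Since $0$ is not exceptional, its backward orbit is infinite, and in particular there are infinitely many distinct $f$-preimages $0 = \b_0, \b_1, \b_2,\dots$ with $f(\b_{i+1})=\b_i$; more usefully, for each $m\ge 1$ the equation $f^{(m)}(X)=0$ has at least one root $\g_m$ which is \emph{not} a root of $f^{(m')}(X)=0$ for $m'<m$, and one can choose three such distinct roots $\g_{m}^{(1)},\g_m^{(2)},\g_m^{(3)}$ for suitable $m$ — this is where non-exceptionality is essential, because a single totally ramified preimage of $0$ that is itself a preimage of $0$ is exactly the obstruction.

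Next I would quantify ``$f^{(n)}(\a)$ is close to $0$ at places in $S$''. Suppose $n$ is large and $\sum_{v\in S}\logplus(\|f^{(n)}(\a)\|_v^{-1}) \ge \e\,\hhat_f(f^{(n)}(\a)) = \e d^{\,n-m}\hhat_f(f^{(m)}(\a))$ for a fixed moderate $m$. The point $f^{(n)}(\a)$ is a root-neighbour of $0$; writing $f^{(n)}(\a) = f^{(n-m)}(\d_m)$ where $\d_m := f^{(m)}(\a)$... more directly: $f^{(n)}(\a)$ being $v$-adically near $0$ forces $f^{(n-m)}(\a)$ to be $v$-adically near one of the finitely many roots of $f^{(m)}(X)$, i.e. near some $\g_m$. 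Standard $v$-adic distance estimates for rational maps (the map $f^{(m)}$ near a root of $f^{(m)}$ is, up to bounded factors depending only on $f$ and $m$, a power of the local distance) convert the lower bound on $\sum_v \logplus\|f^{(n)}(\a)\|_v^{-1}$ into a lower bound of the form $\sum_v \logplus\|f^{(n-m)}(\a)-\g_m\|_v^{-1} \gtrsim (\e/e_m) d^{\,n-m}\hhat_f(\d_m) + O_m(1)$ for some chosen root $\g_m$ and its multiplicity $e_m\le d^m$. Since $d^{\,n-m}\hhat_f(\d_m)\asymp h(f^{(n-m)}(\a)) \asymp$ (by Lemma~\ref{lem:canht} and multiplicativity of $h$ under $f$) a fixed positive multiple of $h(f^{(n-m)}(\a))$, and since for $m$ large we can even split $S$ into three groups each attached to one of $\g_m^{(1)},\g_m^{(2)},\g_m^{(3)}$ — or just note that even approximating a single algebraic $\g_m$ to order $\ge (\e/d^m)h$ with the denominator of $f^{(n-m)}(\a)$ being much smaller, because $\hhat_f(\g_m)=0$ forces $\g_m$ to have bounded height while $h(f^{(n-m)}(\a))\to\infty$ — we get, for all but finitely many $n$, an approximation $\|x-\g\|_v$ small for $\g$ algebraic of bounded degree and an exponent exceeding $2$, contradicting Roth's theorem.

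The one subtlety I want to be careful about, and the step I expect to be the main obstacle, is arranging a genuine \emph{Roth exponent $>2$}. A naive estimate only gives that $f^{(n)}(\a)$ approximates $0$ to order roughly $\e$ times its own height, which is an exponent $1+\e$ relative to the \emph{current} iterate — not enough. The fix is the standard one from dynamical Diophantine approximation (as in Silverman's work on integral points in orbits): one steps back not one but many iterates, i.e. one views $f^{(n)}(\a)$ as $f^{(\ell)}$ applied to $f^{(n-\ell)}(\a)$ with $\ell$ growing, so that the total ramification $\prod$ along the $\ell$ preimage-choices multiplies the approximation exponent by roughly $d^\ell$ while the height of the approximant $f^{(n-\ell)}(\a)$ only shrinks by $d^\ell$; balancing these, and using that $0$ is not exceptional so that along the tree of preimages we never get stuck at a totally ramified fixed preimage, yields for each large $n$ an approximation to some bounded-height algebraic number with exponent bounded below by, say, $3$, uniformly. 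Feeding this into the quantitative (Lang--LeVeque / Bombieri--Gubler) form of Roth over $\K$ with the finite place set $S$ bounds $n$, and the resulting bound depends only on $\K$, $S$, $f$, $\e$ as claimed. I would also need the elementary observation, already available from Lemma~\ref{lem:canht}(d) and Lemma~\ref{lem:fnaeqc}, that $h(f^{(n)}(\a))\to\infty$ and in fact grows like $d^n$, which is what makes the approximant's height genuinely large and legitimizes discarding the finitely many small $n$.
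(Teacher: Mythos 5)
The paper does not reprove this lemma at all: its proof is a one-paragraph citation to Hsia--Silverman, Theorem 11(c), with the observation that taking $A=0$ makes the local distance function $\lambda_v(\beta,0)=\log^+\|\beta\|_v^{-1}$, together with a reference to Silverman's 1993 Duke paper for the original qualitative finiteness. Your proposal instead attempts to reconstruct the content of those cited results from scratch, which is a genuinely different (and much more ambitious) route; the outline you give is indeed the skeleton of Silverman's argument, so the strategy is sound. That said, the reconstruction as written has three real gaps that would need repair before it could stand as a proof.

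First, the ramification bookkeeping in your key paragraph is stated backwards. Stepping back $\ell$ iterates, if $f^{(n-\ell)}(\alpha)$ is $v$-close to a point $\gamma$ with $f^{(\ell)}(\gamma)=0$ and $e:=e_{f^{(\ell)}}(\gamma)$, the local estimate gives $\logplus\|f^{(n-\ell)}(\alpha)-\gamma\|_v^{-1}\gtrsim \tfrac{1}{e}\logplus\|f^{(n)}(\alpha)\|_v^{-1}$, so the exponent relative to $h\bigl(f^{(n-\ell)}(\alpha)\bigr)$ is $\epsilon d^{\ell}/e$, not ``multiplied by $d^\ell$ by the total ramification.'' Ramification \emph{eats into} the gain from the degree. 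Second, and more seriously, ``we never get stuck at a totally ramified preimage'' does not suffice: that observation only forces $e\le \frac{d-1}{d}d^\ell$, a gain of a single fixed factor $(d-1)/d$, which cannot push $\epsilon d^\ell/e$ past $2$ when $\epsilon$ is small. What the argument actually requires --- and what Silverman proves as the key consequence of $0$ being non-exceptional --- is that $\max_{\gamma\in(f^{(\ell)})^{-1}(0)} e_{f^{(\ell)}}(\gamma)/d^\ell\to 0$ as $\ell\to\infty$; this is a nontrivial statement about the backward orbit tree and it is precisely where non-exceptionality is used. Third, Roth's theorem and its quantitative refinements bound the \emph{number} of good approximations, not the largest one; to get a bound $C_3(\K,S,f,\epsilon)$ on $\max n$ that is \emph{uniform in $\alpha$} (which is exactly how the lemma is applied in Case~1 of the proof of Theorem~\ref{thm:fnkaufnars}) one must combine the counting version of Roth with the uniform lower bound $\hhat_f(\alpha)\ge C_2(\K,f)>0$ of Lemma~\ref{lem:canht}(d), which converts a bound on heights of quasi-integral iterates into a bound on the index $n$. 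You invoke Lemma~\ref{lem:canht}(d) but for a different purpose, and the uniformity issue is not really addressed. Finally, the aside about splitting $S$ into three groups attached to three distinct roots is a red herring; nothing in the Roth argument requires three target points. None of these are unfixable, but they are exactly the places where the cited [HsiSil] theorem does careful work, and a proof that merely asserts ``feed into quantitative Roth'' at those junctures is not yet complete.
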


\begin{proof}
The finiteness of the indicated set has originally been proven
in~\cite{Silv93} without an explicit upper
bound. The quantified version that we quote
here is~\cite[Theorem~11(c)]{HsiSil}.  More precisely, we
apply~\cite[Theorem~11(c)]{HsiSil} with $A=0$ and $P=\alpha$.  We
note that for~$A=0$ and with our normalization of the $v$-adic
absolute values, the local distance function~$\l_v(Q,A)$
in~\cite{HsiSil} is given by $\l_v(\b,0)=\log^+\|\b\|_v^{-1}$.
\end{proof}

 \subsection{The genus of plane curves of the form $F(X)=cG(X)Y^m$}
There is a well-known formula for the genus of a superelliptic curve
$f(X) =Y^m$, where $f(X)\in \CC[X]$ is a polynomial; see for 
example~\cite[Exercise A.4.6]{HinSil}.  In this section we find a similar
formula in the case that $f(X)\in\CC(X)$ is a rational function,
modulo certain restrictions on~$m$.  We use square brackets
$[\star,\ldots,\star]$ to denote points in projective space and
parentheses $(\star,\ldots,\star)$ to denote points in affine space.

\begin{definition}\label{def:nu} For a polynomial $F(X) \in\CC[X]$, we define $\nu(F)$
  to be the number of distinct complex roots of~$F$, i.e., the number
  of roots counted \emph{without multiplicity}.
\end{definition}

We frequently apply Definition~\ref{def:nu}  to a product of two relatively primes
polynomials $F(X),G(X)\in\CC[X]$, in which case   
\[
  \nu(FG)=\#f^{-1}\bigl(\{0,\infty\}\bigr)
\]
is equal to the total number of poles and zeros of the rational
function $f(X)=F(X)/G(X)\in\CC(X)$.

\begin{lemma} 
\label{lem:FXcGXYm}
Let $F(X),G(X)\in\CC[X]$ be non-zero polynomials with no common roots,
and assume that they are not both constant.
Let $d_F =\deg F$ and $d_G=\deg G$,   
and let $m$ be an integer satisfying
\begin{equation}
  \label{eqn:gcdmd1}
  m \ge d_F+2\quad\text{and}\quad
  \gcd(m,d_F!\cdot d_G!)=1.
\end{equation}
Let~$C$ be the affine curve
\[
  C : F(X) = G(X)Y^m,
\]
and let~$\widetilde C$ be a smooth projective model of~$C$. 
\begin{parts}
\Part{(a)}
The curve $\widetilde C$ is irreducible.
\Part{(b)} 
The genus of $\widetilde C$ is
given by the formula
\[
  \genus(\widetilde C) = \begin{cases}
     \frac12(\nu(FG)-1)(m-1) &\text{if $d_F\ne d_G$,} \\
     \frac12(\nu(FG)-2)(m-1)   &\text{if $d_F= d_G$.}  
  \end{cases}
\]
\end{parts}
\end{lemma}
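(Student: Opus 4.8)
The plan is to realize $\widetilde C$ as a cyclic cover of $\PP^1$ of degree $m$ and apply the Riemann–Hurwitz formula, after first checking irreducibility via the condition $\gcd(m,d_F!\,d_G!)=1$. First I would set up coordinates: writing $F(X)=a\prod_{i}(X-\alpha_i)^{e_i}$ and $G(X)=b\prod_j(X-\beta_j)^{g_j}$ with the $\alpha_i$ and $\beta_j$ distinct (and disjoint, since $F,G$ have no common roots), the affine equation $Y^m = F(X)/G(X)$ exhibits the function field $\CC(C)$ as $\CC(X)[Y]/(Y^m - F/G)$, a Kummer extension of $\CC(X)$. Irreducibility of this extension — hence part (a) — holds precisely when $F(X)/G(X)$ is not a proper power $h(X)^{m/\delta}$ for any divisor $\delta>1$ of $m$; I would argue that any such representation forces $\delta \mid e_i$ for every zero multiplicity $e_i \le d_F$ and $\delta \mid g_j$ for every pole multiplicity $g_j \le d_G$, as well as $\delta$ dividing the order $|d_F - d_G|\le\max(d_F,d_G)$ at infinity, so $\delta \mid \gcd(m, d_F!\, d_G!) = 1$, a contradiction. (Here the hypotheses $m\ge d_F+2$ and the coprimality are doing the work; the bound $d_F!\,d_G!$ is a crude but convenient multiple of all the relevant multiplicities.)

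Next I would compute the genus by analyzing ramification of the degree-$m$ map $\pi:\widetilde C \to \PP^1$, $(X,Y)\mapsto X$. Over a point $x_0\in\PP^1$, the ramification of a Kummer cover $Y^m = \phi(X)$ is governed by $\gcd(m, \ord_{x_0}\phi)$: the fiber has $\gcd(m,\ord_{x_0}\phi)$ points, each with ramification index $m/\gcd(m,\ord_{x_0}\phi)$. Since $\gcd(m, e_i)=\gcd(m,g_j)=1$ for all relevant multiplicities (again because they divide $d_F!\,d_G!$), every zero $\alpha_i$ of $F$ and every pole $\beta_j$ of $G$ is a totally ramified point, contributing $m-1$ to the ramification divisor degree. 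The point at infinity has $\ord_\infty(F/G) = d_G - d_F$, so it is totally ramified when $d_F\ne d_G$ (contributing another $m-1$) and is unramified when $d_F = d_G$ (contributing $0$); all other points of $\PP^1$ are unramified. Tameness is automatic in characteristic $0$, so Riemann–Hurwitz gives
\[
  2\,\genus(\widetilde C) - 2 = m(2\cdot 0 - 2) + (\#\{\text{branch points}\})\,(m-1).
\]
When $d_F\ne d_G$ the number of branch points is $\nu(F) + \nu(G) + 1 = \nu(FG)+1$, yielding $2\genus(\widetilde C)-2 = -2m + (\nu(FG)+1)(m-1)$, i.e. $\genus(\widetilde C) = \tfrac12(\nu(FG)-1)(m-1)$; when $d_F = d_G$ there are only $\nu(FG)$ branch points, giving $\genus(\widetilde C) = \tfrac12(\nu(FG)-2)(m-1)$, as claimed.

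The main obstacle I anticipate is the bookkeeping at infinity together with making the irreducibility argument airtight: one must be careful that $\ord_\infty(F/G)$ really is $d_G - d_F$ with the correct sign, that no zero or pole of $F/G$ has been double-counted, and — crucially — that the coprimality hypothesis genuinely rules out all intermediate cyclic subcovers, not just the full one. A clean way to handle the latter is to observe that $\CC(X)^*/(\CC(X)^*)^m$ is detected by the collection of orders $(\ord_{x}(\cdot) \bmod m)_{x\in\PP^1}$, and the class of $F/G$ has nonzero components (e.g. at any zero $\alpha_i$, since $\gcd(m,e_i)=1$ forces $e_i\not\equiv 0 \pmod m$ given $m\ge d_F+2 > e_i \ge 1$), so the class generates a subgroup of order exactly $m$; this simultaneously gives irreducibility and confirms the ramification indices. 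A remark should note that the condition $m\ge d_F+2$ is slightly stronger than needed for this argument (one really just needs $e_i,g_j < m$ and $\gcd=1$), but it matches the form in which the lemma will be applied.
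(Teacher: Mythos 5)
Your proof is correct and reaches the same Riemann--Hurwitz computation, but by a cleaner route than the paper. The paper works with the projective closure $\bar C\subset\PP^2$, explicitly identifies its singular points ($[1,0,0]$, the $[\alpha_i,0,1]$ with $e_i\ge2$, and $[0,1,0]$), resolves them by blowing up (the singular-point analysis is relegated to an appendix), counts the preimages in $\widetilde C$ over each singular point, and only then applies Riemann--Hurwitz to the degree-$m$ map $\widetilde C\to\PP^1$. You stay entirely at the level of function fields and read off the ramification directly from the local $\gcd$ formula for Kummer covers $Y^m=\phi(X)$: over $x_0$ there are $\gcd(m,\ord_{x_0}\phi)$ points, each with ramification index $m/\gcd(m,\ord_{x_0}\phi)$. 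Both arguments tabulate the same ramification data and plug it into the same Riemann--Hurwitz count; yours avoids the plane-model bookkeeping entirely. For irreducibility, the paper invokes the criterion from Lang's \emph{Algebra} (Chapter VI, Theorem 9.1) that $Y^m-a$ is irreducible provided $a$ is not a $p$th power for any prime $p\mid m$, relying on $m$ being odd to sidestep the extra condition when $4\mid m$; your Kummer-theoretic formulation via the order of $F/G$ in $\CC(X)^*/(\CC(X)^*)^m$ gives the same conclusion while simultaneously confirming the ramification indices, which is tidier. Your closing remark is also correct: the hypothesis $m\ge d_F+2$ is used in the paper's argument only to control the plane projective model (it guarantees $m+d_G-d_F\ge2$, making $[1,0,0]$ a singular point with the stated local shape); the function-field computation needs only $\gcd(m,d_F!\,d_G!)=1$ together with the existence of at least one zero or pole.
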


\begin{proof} 
(a)\enspace
We need to prove that the polynomial $F(X)-G(X)Y^m$ does not factor in
$\CC[X,Y]$.  We apply~\cite[Chapter~VI, Theorem~9.1]{LangAlg}
to the polynomial $Y^m-F(X)/G(X)$ in the variable~$Y$ with
coefficients in the field~$\CC(X)$.  Since~$m$ is odd, we see that
$Y^m-F(X)/G(X)$ is irreducible in $\CC(X)[Y]$ provided for every
prime~$p\mid m$, the rational function $F(X)/G(X)$ is not a $p$th
power in~$\CC(X)$. But our choice of~$m$ ensures that~$p\nmid d_Fd_G$,
so~$F(X)/G(X)$ cannot be a $p$th power.
\par\noindent(b)\enspace
Write
\[
  F(X)=a\prod_{i=1}^{r} (X-\a_i)^{e_i}
  \quad\text{and}\quad
  G(X)=b\prod_{i=1}^{s} (X-\b_i)^{\e_i},
\]
so $\nu(FG)=r+s$.  We have assumed that $m>d_F$, so the Zariski
closure~$\bar C$ of~$C$ in~$\PP^2$ is given by the homogeneous
equation
\[
  \bar C : a\prod_{i=1}^{r} (X-\a_iZ)^{e_i} Z^{m+d_G-d_F} =
  cb\prod_{i=1}^{s} (X-\b_iZ)^{\e_i} Y^m.
\]  
An elementary calculation, which we give in Appendix~\ref{app:A},
shows that the singular points of $\bar C$ in~$\PP^2$ are:
\[
  [1,0,0]\quad\text{always,}\qquad
  [\a_i,0,1]\quad\text{if $e_i\ge2$,}\qquad
  [0,1,0]\quad\text{if $\deg G\ne1$.}
\]

We consider the map $\widetilde C\to\bar C$.  In an infinitesimal
neighbourhood of the singular the equation of~$\bar C$ looks like
$Z^{m+d_G-d_F} = Y^m$, so there are $\gcd(d_G-d_F,m)$ points
of~$\widetilde C$ lying over~$[1,0,0]$. This gives two cases.  If
$d_F=d_G$, then there are~$m$ points of~$\widetilde C$ lying
over~$[1,0,0]$.  On the other hand, if $d_F\ne d_G$, then $1\le
|d_F-d_G|\le\max\{d_F,d_G\}$, so~\eqref{eqn:gcdmd1} implies that
$\gcd(d_G-d_F,m)=1$, and thus in this case there is~$1$ point
of~$\widetilde C$ lying over~$[1,0,0]$.

Similarly, we note that in an infinitesimal neighbourhood of a singular point
$[\a_i,0,1]$, the equation of~$C$ looks locally like
$(X-\a_i)^{e_i}=Y^m$. Blowing up this singularity yields $\gcd(e_i,m)$
points on~$\widetilde C$ lying over~$[\a_i,0,1]$, and our
assumption~\eqref{eqn:gcdmd1} implies that $\gcd(e_i,m)=1$.

Finally, if $[0,1,0]$ is singular, then dehomogenising $Y=1$
gives an affine  equation for~$\bar C$ of the form
\[
  a\prod_{i=1}^{r} (X-\a_iZ)^{e_i} Z^{m+d_G-d_F}
  =  cb\prod_{i=1}^{s} (X-\b_iZ)^{\e_i}.
\]  
We blow up the point $(0,0)$. We start with the chart on the blowup
given by $X=SZ$. Substituting and canceling the common factor
of~$Z^{d_G}$, we obtain the equation
\begin{equation}
  \label{eqn:chart1}
  a\prod_{i=1}^{r} (S-\a_i)^{e_i} Z^{m}
  =  cb\prod_{i=1}^{s} (S-\b_i)^{\e_i}.
\end{equation}  
The points on the blowup above $(X,Z)=(0,0)$ are the points
$(S,Z)=(\b_i,0)$.  The point~$(\b_i,0)$ is singular if and only
if~$\e_i\ge2$, but just as in our earlier calculation, our choice
of~$m$ ensures that there is only one point of~$\widetilde C$ lying above
each of these singular points.  Thus we have found~$s$ points
of~$\widetilde C$ lying above the singular point $[0,1,0]\in\bar C$.  (We
note that this is also true if $[0,1,0]$ is nonsingular, since that
case occurs if $d_G=1$, which implies also that~$s=1$.)

It remains to check the other chart on the blowup, which is given
by $Z=TX$. Substituting and canceling a power of~$X$ yields
\[
  a\prod_{i=1}^{r} (1-\a_iT)^{e_i} T^{m+d_G-d_F} X^{m}
  -  cb\prod_{i=1}^{s} (1-\b_iT)^{\e_i}  = 0.
\]  
The only point in this chart that is not in the other chart is
$(T,X)=(0,0)$, which is not a point on the blowup of the curve. So we
obtain no further points in this chart.

To summarize, if we let $\pi:\widetilde C\to\bar C$ denote the map coming from
the various blowups used to desingularize~$\bar C$, we have
\begin{align*}
  \#\pi^{-1}\left([1,0,0]\right) &= \begin{cases}
    1&\text{if $d_F\ne d_G$,} \\
    m&\text{if $d_F=d_G$,} \\
  \end{cases} \\
  \#\pi^{-1}\left([\a_i,0,1]\right) &= 1\quad\text{for each $1\le i\le r$,} \\
  \#\pi^{-1}\left([0,1,0]\right) &= s.
\end{align*}

We now consider the covering map
\[
  \f : \widetilde C \longrightarrow \bar C
  \xrightarrow[ {[X,Y,Z]}\mapsto{[X,Z]} ] {} \PP^1,
\]
where we note that $\deg \f=m$. Indeed, the map~$\f$ is Galois with
Galois group~$\ZZ/m\ZZ$. However, we need to be a bit careful, since
the map from $\bar C\to\PP^1$ is not defined at $[0,1,0]\in\bar C$,
although it is defined at the~$s$ points of~$\widetilde C$ lying
over~$[0,1,0]$.  More precisely, on the chart~\eqref{eqn:chart1}
for~$\widetilde C$ with affine coordinates $(S,Z)$ and $X=SZ$, the
map~$\phi$ is given by $\f(S,Z)=[X,Z]=[SZ,Z]=[S,1]$, so the~$s$ points
$\widetilde\b_1,\ldots,\widetilde\b_s$ on~$\widetilde C$ lying
over~$[0,1,0]\in\bar C$ satisfy~$\f(\widetilde\b_i)=[\b_i,1]$, and
indeed we have $\f^{-1}\bigl([\b_i,1]\bigr\}=\{\b_i\}$ for each $1\le
i\le s$.

We use the Riemann--Hurwitz formula, see~\cite[Theorem~1.5]{Silv07},
\[
  2\genus(\widetilde C) - 2 = \left(2\genus(\PP^1)-2\right)\cdot\deg \f
  +  \sum_{P\in\widetilde C} \left(e_\f(P) - 1 \right).
\]
Substituting $\deg \f=m$ and $\genus(\PP^1)=0$, and applying~\cite[Corollary~1.3]{Silv07}, this yields
\begin{align*}
  2\genus(\widetilde C)
  &= 2(1-m) + \sum_{P\in\widetilde C} \left(e_\f(P) - 1 \right) \\
  &=2(1-m) + \sum_{Q\in\PP^1}\sum_{P\in \f^{-1}(Q)} \left(e_\f(P) - 1 \right)\\
  &= 2(1-m)+\sum_{Q\in\PP^1} \left( \deg \f - \#\f^{-1}(Q) \right) \\
  &= 2(1-m)+ \sum_{i=1}^r \left( m - \#\f^{-1}\bigl([\a_i,1]\bigr) \right)
   \\
  &\qquad\quad {} + \sum_{i=1}^s \bigl( m - \#\f^{-1}([\b_i,1]) \bigr) + \left( m - \#\f^{-1}\bigl([1,0]\bigr) \right)\\
  &= 2(1-m)+r(m-1) + s(m-1) + 
  \begin{cases}
    m-1&\text{if $d_F\ne d_G$} \\
    0&\text{if $d_F= d_G$} \\    
  \end{cases} \\
  &= \begin{cases}
     (r+s-1)(m-1) &\text{if $d_F\ne d_G$,} \\
     (r+s-2)(m-1)   &\text{if $d_F= d_G$.} 
  \end{cases}
\end{align*}  
This completes the proof.
\end{proof}

\subsection{Generalised Schinzel-Tijdeman theorem}
We also need the following general version of the Schinzel-Tijdeman
Theorem~\cite{SchTij}, which also
extends~\cite[Theorem~2.3]{BEG}. More precisely, the constant $C$ in
Lemma~\ref{eq:GenS-T} stated below depends only on the prime ideal
divisors of the coefficient $b$, and not on its height as
in~\cite{BEG}. For our purposes, this improvement is crucial.

For a set of places~$S$ of~$\K$, we write $R_S$ for the ring of
$S$-integers and $R_S^*$ for the group of $S$-units.

\begin{lemma}
\label{eq:GenS-T}
Let $\K$ be a number field, and let $S$ be a finite set of places of
$\K$ containing all infinite places.  Let $f\in R_S[X]$ be a polynomial
without multiple roots and of degree at least $2$. There is an
effectively computable constant $C(f, \K, S)$, depending only on $f$,
$\K$ and $S$, so that the following holds\textup: If~$b\in R_S^*$ and
if the equation
\[
  f(x)=b \cdot y^m
\]
has a solution satisfying
\[
  x,y,\in R_S\quad\text{and}\quad y\notin\{0\}\cup R_S^*,
\]
then
\[
  m \leq C(f, \K, S).
\]
\end{lemma}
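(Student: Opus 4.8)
The plan is to reduce the assertion to the classical Schinzel--Tijdeman-type theorem from~\cite{SchTij,BEG}, where the bound on~$m$ is allowed to depend on the height of the coefficient~$b$, and then to eliminate that dependence using the hypothesis~$b\in R_S^*$ by enlarging the set of places. First I would observe that since~$b$ is an~$S$-unit, every prime ideal dividing~$b$ already lies in~$S$, so in fact~$b$ has no new prime factors to worry about; the only obstruction to quoting~\cite[Theorem~2.3]{BEG} directly is that the height of~$b$ is unbounded. The trick is to absorb~$b$ into the variable: writing~$b = \prod_{\gp\in S\setminus M_\K^\infty}\gp^{v_\gp(b)}\cdot u$ with~$u$ a genuine unit, I would factor out the~$m$-th-power part. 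More precisely, for each finite place~$\gp\in S$ write~$v_\gp(b) = m q_\gp + t_\gp$ with~$0\le t_\gp < m$, so that~$b = b_0 c^m$ with~$c = \prod \gp^{q_\gp}\in\K^*$ and~$b_0$ an~$S$-unit all of whose valuations are bounded by~$m$ in absolute value.

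This does not by itself bound the height of~$b_0$, because~$m$ itself is the quantity we are trying to bound, so I would instead argue by contradiction in the style of such reduction arguments: suppose the conclusion fails, so there is a sequence of solutions~$(x_j,y_j,b_j,m_j)$ with~$m_j\to\infty$. The key step is then to rescale~$X$. Set~$\tilde f(X) = c^{-\deg f} f(cX)$ for a suitable auxiliary scalar~$c$ chosen so as to clear the~$m$-th-power part of~$b$; after this substitution~$x = cx'$ the equation becomes~$\tilde f(x') = b_0 \cdot (y/c^{?})^m$ with the residual coefficient~$b_0$ having all~$S$-valuations in~$[0,m)$. Unfortunately~$\tilde f$ now has coefficients depending on~$m$ through~$c$, which is again circular. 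The cleaner route, and the one I would actually pursue, is to enlarge~$S$ to a fixed finite set~$S'$ that makes the coefficient a genuine unit from the start: since the set of possible coefficients~$b$ is an arbitrary~$S$-unit, we cannot make~$S'$ finite this way either.

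So the genuinely correct approach is the following. Apply the Schinzel--Tijdeman theorem of~\cite{BEG} not to the coefficient~$b$ but after performing the substitution that removes as many~$m$-th powers from~$b$ as possible: write~$b y^m = b_0 (cy)^m$ where~$c^m$ is the largest~$m$-th-power divisor of~$b$ in the~$S$-unit group modulo~$m$-th powers, so that~$h(b_0)\le \tfrac{\log\#(R_S^*/(R_S^*)^m)\cdot(\text{something})}{\,}$ — and here is the real point: the group~$R_S^*/(R_S^*)^m$ has order~$m^{\,\rank R_S^* + \#\{\text{torsion}\}}$, and a set of representatives of bounded size does \emph{not} exist uniformly in~$m$. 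Hence one must use the \emph{effective} version with explicit dependence: by~\cite[Theorem~2.3]{BEG} one has~$m \le C'(f,\K,S)\bigl(1 + h(b_0)\bigr)$, and then one shows~$h(b_0) = O(\log m)$ because~$b_0$ is an~$S$-unit whose exponents are bounded by~$m$, giving~$m\le C'(1 + O(\log m))$, which forces~$m$ to be bounded by an effectively computable constant. The main obstacle is precisely this last step: making the bound~$h(b_0)=O(\log m)$ rigorous and effective, and checking that the implied constant depends only on~$f,\K,S$ (through the rank of~$R_S^*$ and the regulator), not on~$b$. Once that logarithmic-versus-linear comparison is in hand, solving~$m\le C'(1+c_1\log m + c_2)$ for~$m$ yields the stated effectively computable~$C(f,\K,S)$, and one must separately check that the solution condition~$y\notin\{0\}\cup R_S^*$ is preserved under the substitution~$y\mapsto cy$ (it is, since~$c\in\K^*$ and dividing by a nonzero~$S$-unit power cannot turn a non-unit into a unit once~$S$ is large enough to contain the support of~$c$ — which it is, as~$c^m\mid b$ and~$b\in R_S^*$).
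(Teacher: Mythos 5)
Your proposal, after circling through and correctly discarding several non-starters, settles on a strategy that is genuinely different from the paper's — quote \cite[Theorem~2.3]{BEG} as a black box to get $m \le C'(f,\K,S)\bigl(1+h(b_0)\bigr)$, reduce $b$ modulo $m$-th powers to a residual $b_0$, and then argue $h(b_0) = O(\log m)$. You flag this last step as ``the main obstacle,'' and it is in fact fatal, not merely unfinished: the estimate is false. Writing $b$ in terms of a fundamental system of $S$-units $\xi_1,\ldots,\xi_r$ and torsion, $b = \zeta\prod_i \xi_i^{a_i}$, and reducing the exponent vector modulo $m$ gives $b_0 = \zeta\prod_i\xi_i^{a_i \bmod m}$ with exponents in $[0,m)$, hence $h(b_0) \le \sum_i (m-1)\,h(\xi_i) = O(m)$, not $O(\log m)$. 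Substituting $h(b_0)=O(m)$ into $m \le C'(1+h(b_0))$ yields nothing. (There is also a secondary issue you partly notice: the element $c = \prod_{\gp} \gp^{q_\gp}$ you want to factor out need not lie in $\K^*$ unless the relevant ideals are principal, though one could patch this by passing to a class-number power; the $O(m)$ problem is the real obstruction.)

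The paper's proof does not post-process the conclusion of \cite[Theorem~2.3]{BEG} but instead modifies the final steps of its internal argument. The place in \cite{BEG} where the coefficient's height enters the bound on $m$ is through the $S$-norm of the right-hand side, and the key observation — which your proposal never isolates — is that $N_S$ is blind to $S$-units: for $b\in R_S^*$ one has $N_S(by^m)=N_S(y^m)$. Consequently one gets directly
\[
\frac{1}{[\K:\Q]}N_S(y^m)=\frac{1}{[\K:\Q]}N_S(b\,y^m)\le h(b\,y^m)\le h\bigl(f(x)\bigr),
\]
which, combined with the earlier estimates (5.1)--(5.10) of \cite{BEG} (valid after replacing $\hat h$ by $h(f)$) and the reduction $X\ge\max(C_3,\, m(4d)^{-1}(\log 3d)^{-3})$, yields $m \le [\K:\Q](nX+h(a_0))/\log 2$ with no dependence on $h(b)$ anywhere. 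That is the indispensable idea, and it has no analogue in your black-box approach; the dependence on $b$ has to be killed \emph{inside} the proof, not removed afterwards.
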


\begin{proof}
The proof is nearly identical to that of~\cite[Theorem~2.3]{BEG}, with
$\hat{h}$ replaced by $h(f)$. Here we only indicate the slight changes
which are needed in the proof, under the assumption that
$b\in{R_S^*}$. Thus in this proof we use the notation from the proof
of~\cite[Theorem~2.3]{BEG}, other than sticking with our notation for
the sets of $S$-integers and $S$-units.

In~\cite[Lemmas~4.15 and~4.17]{BEG}, $\hat{h}$ may be clearly replaced
by $h(f)$. Further, the estimates
in~\cite[Equations~(5.1)--(5.10)]{BEG} remai valid if we replace
$\hat{h}$ by $h(f)$, under the assumption that $b\in R_S^*$.

Instead of~\cite[Equations~(5.11) and~(5.12)]{BEG}, we argue as
follows. We may assume without loss of generality that
\[
X\geq \max(C_3, m(4d)^{-1}(\log 3d)^{-3}),
\]
with $C_3$ being the constant specified in~\cite[Equation~(5.12)]{BEG}. Indeed, if
\[
X\geq \max(C_3, m(4d)^{-1}(\log 3d)^{-3})
\]
then by 
\[
\frac{1}{[\K:\Q]}N_S(y^m)=\frac{1}{[\K:\Q]}N_S(b y^m)\leq h(b y^m) \leq h(f(x)),
\]
we obtain
\[
m \leq [\K:\Q] \frac{nX+h(a_0)}{\log 2}.
\]

The rest of the proof of~\cite[Theorem~2.3]{BEG} follows without any
changes other than replacing $\hat{h}$ by $h(f)$ at each occurrence.
\end{proof}

\begin{remark} 
It is crucial for our argument that the constant~$C(f,\K,S)$ in Lemma~\ref{eq:GenS-T}
is independent of~$b\in R_S^*$.
\end{remark}

\section{Proofs of main results}

\subsection{Preliminary discussion} 
In this section we give the proofs of
Theorems~\ref{theorem:fnainG},~\ref{thm:fnkaufnars},~\ref{thm:fnkaufna},~\ref{thm:2multdep-arb}
and~\ref{thm:diffprod}, which are main results of this paper. Throughout these proofs we use the following 
common notation. 
We let~$S$ be the following set of absolute values on~$\KK$:
\begin{equation}
\label{eqn:S}
  S := M_\K^\infty \cup \bigl\{v\in M_\K^0 : \text{$\|\g\|_v\ne1$ for some $\g\in\Gamma$} \bigr\}.
\end{equation}
The set~$S$ is finite, since~$\Gamma$ is finitely generated, so in
proving our main theorems, we may assume that~$\Gamma$ is the full
group of $S$-units,
\[
   \Gamma =  R_S^* = \left\{ \b\in\KK^*: 
  \text{$\|\b\|_v=1$ for all $v\in M_\K \setminus S$} \right\}.
  \]
This is convenient because~$R_S^*$ is multiplicatively saturated
in~$\K^*$, i.e., if~$\g\in\K^*$ and~$\g^n\in R_S^*$ for some~$n\ne0$,
then~$\g\in R_S^*$.

\subsection{Proof of Theorem~$\ref{theorem:fnainG}$}
(a)\enspace
As noted earlier, this statement has originally been proven
in~\cite[Proposition~1.5 (a)]{KLSTYZ}, but as a convenience to the
reader, we include the short proof.  By assumption, the set
\begin{equation}
\label{eqn:ainKfainG}
  \Gcal(\K,f,\Gamma):=
  \bigl\{ \alpha\in\K : f(\a) \in \Gamma \bigr\}
\end{equation}
is infinite.  Let $m=\max\{d!+1,5\}$, We fix coset representatives
$c_1,\ldots,c_t\in R_S^*$ for the finite group $R_S^*/(R_S^*)^m$.
Then for every~$\alpha\in\Gcal(\K,f,\Gamma)$, we can find an
index~$i(\alpha)\in\{1,\ldots,t\}$ and some~$u\in R_S^*$ so that
$f(\a)=c_{i(\alpha)}u^m$.  Hence the assumption that the
set~$\Gcal(\K,f,\Gamma)$ is infinite means that we can find a
$c\in{R_S^*}$ so that the set
\[
  \bigl\{ (\alpha,u)\in\K\times R_S^* : f(\alpha)=cu^m \bigr\}
\]
is infinite. It follows that every~$(\alpha,u)$ in this set is on 
the algebraic curve
\[
  C : f(X)=cY^m,\quad\text{and hence that}\quad \#C(\K)=\infty.
\]
Writing $f(X)=F(X)/G(X)$ with $F(X),G(X)\in\K[X]$ relatively prime,
the curve~$C$ has the equation
$F(X)=cY^mG(X)$, so Lemma~\ref{lem:FXcGXYm} tells us that the genus of
a smooth projective model~$\tilde C$ for~$C$ is given by
\[
  \genus(\tilde C) = 
  \frac{m-1}{2}\left(
  \nu(FG) - \left\{\begin{tabular}{@{}ll@{}}
  1&if $\deg(F)\ne\deg(G)$\\
  2&if $\deg(F)=\deg(G)$
  \end{tabular}
  \right\}
  \right),
\]
where we recall that $\nu(H)$ denotes the number of \emph{distinct}
complex roots of a polynomial $H$.  Faltings'
theorem~\cite{Falt83,Falt86} (Mordell conjecture) tells us
that~$C(\K)$ is finite if~$\genus(\tilde C)\ge2$, so the fact
that~$\#C(\K)=\infty$ implies that one of the following cases is true.
\begin{align*}
\textbf{Case I}: && \nu(FG)&=1, & \deg(F) &\ne \deg(G). \\
\textbf{Case II}: && \nu(FG)&=2, & \deg(F) &= \deg(G). 
\end{align*}

In Case~I, the fact that~$\nu(FG)=1$ means that one of~$F$ or~$G$ is
constant and the other has only a single root. Since~$\deg f=d$, this
means that~$f$ has the form~$f(X)=a(x-b)^{\pm d}$. In Case~II, the
fact that~$\nu(FG)=2$ and~$F$ and~$G$ both have degree~$d$ implies
that they each have a single root, so $F(X)=a_1(X-b)^d$ and
$G(X)=a_2(X-c)^d$, which proves that~$f$ has the
form~$f(X)=a(X-b)^d/(X-c)^d$.
\par\noindent(b)\enspace
We are assuming that the set
\begin{equation}
\label{eqn:naZn2WKffnaG}
  \Fcal_2(\K,f,\Gamma) =
  \bigl\{ (n,\alpha)\in\ZZ_{\ge2}\times\Wander_\K(f) : f^{(n)}(\a) \in \Gamma \bigr\}
\end{equation}
is infinite. Since every pair~$(n,\alpha)$ in $\Fcal_2(\K,f,\Gamma)$
has~$n\ge2$, we have a well-defined map
\[
  \Fcal_2(\K,f,\Gamma) \longrightarrow \Gcal(\K,f^{(2)},\Gamma),\quad
  (n,\alpha) \longmapsto f^{(n-2)}(\alpha).
\]
If the set~$\Gcal(\K,f^{(2)},\Gamma)$ is finite, then there are only
finitely many possible values for~$f^{(n-2)}(\alpha)$ as~$(n,\alpha)$
range over the set~$\Fcal_2(\K,f,\Gamma)$, and then
Lemma~\ref{lem:fnaeqc} tells us that there are only finitely many
possiblities for~$n$ and~$\alpha$, contradicting the assumption
that~$\#\Fcal_2(\K,f,\Gamma)=\infty$.

Hence we must have $\#\Gcal(\K,f^{(2)},\Gamma)=\infty$, and then~(a) applied to
the map~$f^{(2)}(X)$ tells us that~$f^{(2)}(X)$ has one of the following three forms:
\begin{align*}
  f^{(2)}(X) &= \f_{\pm}(X) :=  a(X-b)^{\pm d^2}, \\
  f^{(2)}(X) &= \psi(X) :=  a(X-b)^{d^2}/(X-c)^{d^2}.
\end{align*}
We claim that this forces~$f(X)$ to have the form~$aX^{\pm d}$. 

We observe that~$\f_{\pm}(X)$ is totally ramified at~$b$ and~$\infty$,
so in this case~$f$ is totally ramified at~$b$,~$\infty$,~$f(b)$,
and~$f(\infty)$. Similarly, the map~$\psi(X)$ is totally ramified
at~$b$ and~$c$, so in this case~$f$ is totally ramified
at~$b$,~$c$,~$f(b)$, and~$f(c)$.  The Riemann--Hurwitz
formula~\cite[Theorem~1.5]{Silv07} implies that a rational map has at
most two points where it is totally ramified, and hence
\begin{align*}
  f^{(2)}(X)&=\f_{\pm}(X) &\Longrightarrow&& f(b),f(\infty)&\in\{b,\infty\}, \\
  f^{(2)}(X)&=\psi(X) &\Longrightarrow&& f(b),f(c)&\in\{b,c\}.
\end{align*} 
This leads to several subcases, which are detailed in
Figure~\ref{figure:fphipsicases}, in which we use the symbol
`$\rightarrow\!\leftarrow$' to indicate a contradiction.
Looking at Figure~\ref{figure:fphipsicases}, we see that if
$f^{(2)}=\f_{+}$ and $f(b)=b$, then~$0$ and~$\infty$ are totally
ramified fixed points of~$f$, so~$f(X)=aX^d$, while if
$f^{(2)}=\f_{+}$ and $f(b)=\infty$, then~$0$ and~$\infty$ are totally
ramified and form a $2$-cycle, so~$f(X)=aX^{-d}$. Finally, all cases
with~$f^{(2)}=\f_{-}$ and~$f^{(2)}=\psi$ lead to contradictions. This
completes the proof of Theorem~\ref{theorem:fnainG}. \qed
\begin{figure}[p]
\begin{alignat*}{5}
\cline{1-5}
f^{(2)} &= \f_{+},\; f(b)=b
&\;\Longrightarrow
&  \quad
  \left(
  \begin{array}{@{}c@{}c@{}c@{}c@{}c@{}}
    b &\xrightarrow{f}& b &\xrightarrow{f}& 0 \\
    \infty &\xrightarrow{f}& f(\infty) &\xrightarrow{f}& \infty \\
  \end{array}
  \right)\\
&&\;\Longrightarrow 
& \quad \left\{ \begin{tabular}{@{}l@{}}
    $b = f(b) = 0$, so\\
    $f(\infty)\ne b$, so\\
    $f(\infty) = \infty$.
  \end{tabular} \right.
  \\
\cline{1-5}
f^{(2)} &= \f_{+},\; f(b)=\infty
&\;\Longrightarrow
&  \quad
  \left(
  \begin{array}{@{}c@{}c@{}c@{}c@{}c@{}}
    b &\xrightarrow{f}& \infty &\xrightarrow{f}& 0 \\
    \infty &\xrightarrow{f}& f(\infty) &\xrightarrow{f}& \infty \\
  \end{array}
  \right)\\
&&\;\Longrightarrow 
&   \quad \left\{ \begin{tabular}{@{}l@{}}
    $0 = f^{(2)}(b) = f(\infty)$, so\\
    $\infty=f^{(2)}(\infty)=f(0)$ and\\
    $b=f^{-1}(\infty)=0$.\\
  \end{tabular} \right.
\\
\cline{1-5}
f^{(2)} &= \f_{-},\; f(b)=b
&\;\Longrightarrow
&  \quad
  \left(
  \begin{array}{@{}c@{}c@{}c@{}c@{}c@{}}
    b &\xrightarrow{f}& b &\xrightarrow{f}& \infty \\
    \infty &\xrightarrow{f}& f(\infty) &\xrightarrow{f}& 0 \\
  \end{array}
  \right)\\
&&\;\Longrightarrow
&  \quad  \infty = f(b) = b.\; \rightarrow\!\leftarrow
\\
\cline{1-5}
f^{(2)} &= \f_{-},\; f(b)=\infty
&\;\Longrightarrow
&  \quad
  \left(
  \begin{array}{@{}c@{}c@{}c@{}c@{}c@{}}
    b &\xrightarrow{f}& \infty &\xrightarrow{f}& \infty \\
    \infty &\xrightarrow{f}& f(\infty) &\xrightarrow{f}& 0 \\
  \end{array}
  \right)\\
&&\;\Longrightarrow
&  \quad \infty = f(\infty) =  0.\; \rightarrow\!\leftarrow
\\
\cline{1-5}
f^{(2)} &= \psi,\; f(b)=b
&\;\Longrightarrow
&  \quad
  \left(
  \begin{array}{@{}c@{}c@{}c@{}c@{}c@{}}
    b &\xrightarrow{f}& b &\xrightarrow{f}& 0 \\
    c &\xrightarrow{f}& f(c) &\xrightarrow{f}& \infty \\
  \end{array}
  \right)\\
&&\;\Longrightarrow
&  \quad \left\{ \begin{tabular}{@{}l@{}}
    $0 = f(b) = b$, so\\
    $f(c)\ne b$, so $f(c)=c$, so\\
    $\infty = f(c) = c.\; \rightarrow\!\leftarrow$
  \end{tabular} \right.
\\
\cline{1-5}
f^{(2)} &= \psi,\; f(b)=c
&\;\Longrightarrow
& \quad
  \left(
  \begin{array}{@{}c@{}c@{}c@{}c@{}c@{}}
    b &\xrightarrow{f}& c &\xrightarrow{f}& 0 \\
    c &\xrightarrow{f}& f(c) &\xrightarrow{f}& \infty \\
  \end{array}
  \right)\\
&&\;\Longrightarrow\;
&  \quad \left\{\begin{tabular}{@{}l@{}}
    $f(c)\ne c$ (else $0=f(c)=\infty$),\\
    so $f(c)=b$, so\\
    $\infty = f(b) = c.\; \rightarrow\!\leftarrow$
  \end{tabular} \right. \\
\cline{1-5}
\end{alignat*}
\caption{Case-by-case analysis for values of $f^{(2)}$ and $f(b)$}
\label{figure:fphipsicases}
\end{figure}

\subsection{Proof of Theorem~$\ref{thm:fnkaufnars}$}
We are going to prove the contrapositve statement:
\[
  \#f^{-1}\bigl(\{0,\infty\}\bigr)\ge3
  \quad\Longrightarrow\quad
  \text{$\Ecal_\rho(\K,f,\Gamma,r,s)$ is a finite set.}
\]
The assumption that~$f$ has at least~$3$ zeros and poles implies in
particular that $f$ does not have the form $cX^{\pm d}$, so we know
that at least one of~$0$ or~$\infty$ is not exceptional.  If~$0$ is
exceptional, then we claim that we can swap~$0$ and~$\infty$ and
replace~$f(X)$ with the polynomial $g(X)=f(X^{-1})^{-1}\in\K[X]$. To
see this, we note that an initial wandering point~$\a$ for~$f$, which
necessarily satisfies~$\a\ne0$ since~$0$ is exceptional, is mapped to
the initial  wandering point~$\a^{-1}$ for~$g$.  Further,
$f^{(n)}(\a) = g^{(n)}(\a^{-1})^{-1}$, so
\begin{align*}
  f^{(n+k)}(\a)^r/f^{(k)}(\a)^{s}\in\Gamma
  &\quad\Longleftrightarrow\quad
  g^{(k)}(\a^{-1})^{s}/g^{(n+k)}(\a^{-1})^r\in\Gamma\\* 
  &\quad\Longleftrightarrow\quad
  g^{(n+k)}(\a^{-1})^r/g^{(k)}(\a^{-1})^{s}\in\Gamma,
\end{align*}
where the first implication follows from the definition of~$g$, and the
second from the fact that~$\Gamma$ is a group. 
Thus if~$0$ is exceptional for~$f$, then there is a bijection
\begin{align*}
  \Ecal_\rho(\K,f(X),\Gamma,r,s) &\longrightarrow
  \Ecal_\rho(\K,f(X^{-1})^{-1},\Gamma,r,s), \\
  (n,k,\alpha,u) &\longmapsto (n,k,\alpha^{-1},u^{-1}).
\end{align*}
Hence, without loss of generality, we may assume that~$0$ is  not an
exceptional point for~$f$, which in turn allows us to apply
Lemma~\ref{lem:lvfnageehffna}.

We want to study the set  of triples
$(n,k,\a) \in \ZZ_{\ge1} \times \ZZ_{\ge0} \times \Wander_\K(f)$ such that
\begin{equation}
  \label{eqn:fnkvafnavvnotS}
  \left\| f^{(n+k)}(\a) \right\|_v^r  = \left\| f^{(k)}(\a) \right\|_v^{s}
  \quad\text{for all $v\in M_\K \setminus S$.}
\end{equation}

For an arbitrary choice of~$\varepsilon$, to be specified later, we
let $C_3(\KK,S,f,\varepsilon)$ be the constant from
Lemma~\ref{lem:lvfnageehffna}, and we split the proof into two
cases, depending on the size of $n+k$.

\Case{1}{$n+k\ge C_3(\KK,S,f,\varepsilon)$}
In this case Lemma~\ref{lem:lvfnageehffna} tells us that $(n,k,\alpha)$
satisfies
\begin{equation}
  \label{eqn:lpfnkleehfnk}
  \sum_{v\in S}
  \logplus\left(\left\|f^{(n+k)}(\a)\right\|_v^{-1}\right) \le \varepsilon  \hhat_f \left(f^{(n+k)}(\a)\right).
\end{equation}

Since $h(\g)=h(\g^{-1})$ and using~\eqref{eqn:defht}, we compute
\begin{align*}
  h\left(f^{(n+k)}(\a)\right)
 &= h\left(f^{(n+k)}(\a)^{-1}\right)  
 = \sum_{v\in M_\K} \logplus\left(\left\|f^{(n+k)}(\a)\right\|_v^{-1}\right)\\
   &= \sum_{v\in S} \logplus\left(\left\|f^{(n+k)}(\a)\right\|_v^{-1}\right)\\
 & \qquad \qquad \qquad  +  \sum_{v\in M_\K \setminus S} \logplus\left(\left\|f^{(n+k)}(\a)\right\|_v^{-1}\right).
\end{align*}
Now, using~\eqref{eqn:fnkvafnavvnotS} and~\eqref{eqn:lpfnkleehfnk}
\begin{align*}
  h\left(f^{(n+k)}(\a)\right) 
   &\le \varepsilon  \hhat_f \left(f^{(n+k)}(\a)\right) +  \sum_{v\in M_\K \setminus S}\logplus\left(\left\|f^{(k)}(\a)\right\|_v^{-s/r}\right)
 \\
 &\le \varepsilon  \hhat_f \left(f^{(n+k)}(\a)\right) +  \frac{|s|}{|r|}  h\left(f^{(k)}(\a)^{-1}\right) \\
 &= \varepsilon  \hhat_f \left(f^{(n+k)}(\a)\right) +  d^{\rho-1} h\left(f^{(k)}(\a)\right),
\end{align*}
where for the last equality we have used the fact that~$\rho$ is
defined by the relation $|s|/|r|=d^{\rho-1}$.  We next use
Lemma~\ref{lem:canht}(a) to replace the Weil height~$h$ with the
canonical height~$\hhat_f$. This yields
\[
  \hhat_f\left(f^{(n+k)}(\a)\right) -  C_1(f)  
 \le   \varepsilon  \hhat_f \bigl(f^{(n+k)}(\a)\bigr) +  d^{\rho-1}\bigl(  \hhat_f \bigl(f^{(k)}(\a)\bigr) + C_1(f)  \bigr) ,
\]
and a little algebra leads to
\[
 (1-   \varepsilon)\hhat_f\bigl(f^{(n+k)}(\a)\bigr)  \le  d^{\rho-1} \hhat_f \bigl(f^{(k)}(\a)\bigr)+ C_1(f)\left(1+d^{\rho-1}\right).
\]
Using Lemma~\ref{lem:canht}(b) gives
\[
 (1-   \varepsilon) d^{n+k}\hhat_f\bigl(\a\bigr)  \le  d^{\rho-1+k}\hhat_f \bigl(\a\bigr) + C_1(f)\left(1+d^{\rho-1}\right),
\]
and hence
\begin{equation}
  \label{eq:dkdnfaC1}
d^k  \left((1-   \varepsilon) d^{n} - d^{\rho-1}\right) \hhat_f\bigl(\a\bigr)  \le C_1(f)\left(1+d^{\rho-1}\right).
\end{equation}

Taking $\varepsilon = 1/3$,  we see that for  $n \ge \rho $ we have
\[
  (1-   \varepsilon) d^{n} - d^{\rho-1}=
 \frac{2}{3}d^{n}- d^{\rho-1} \ge
  \frac{4}{3}d^{n-1}- d^{\rho-1} \ge \frac{1}{3}d^{n-1},
\]
and thus we derive from~\eqref{eq:dkdnfaC1} that 
\begin{equation}
\label{eqn:2C1fgedkhfa}
d^{n+k-1}  \hhat_f(\a) \le 3  C_1(f)\left(1+d^{\rho-1}\right).
\end{equation}
Using $d^{n+k-1} \hhat_f=\hhat_f\circ f^{(n+k-1)}$ and again
Lemma~\ref{lem:canht}(a), which gives $\hhat_f\ge h-C_1(f)$,
\eqref{eqn:2C1fgedkhfa} yields
\[
  h\bigl(f^{(n+k-1)}(\a)\bigr) \le 3  C_1(f)\left(1+d^{\rho-1}\right)+C_1(f).
\]
Thus $f^{(n+k-1)}(\a)$ is in a set of bounded height, where the bound
depends only on~$f$ and~$\rho$, so there are only finitely many
possible values for $f^{(n+k-1)}(\a)$. Then Lemma~\ref{lem:fnaeqc}
tells us that there are only finitely many possible values
for~$n$,~$k$, and~$\alpha$.  This completes the proof in Case~1 that
there only finitely many $4$-tuples $(n,k,\a,u)$ satisfying
$n+k\ge{C}_3\bigl(\KK,S,f,\frac13\bigr)$. We note
that for this case we only needed the assumption that $f$ is of degree
$d\ge 2$ and does not have the form $cX^{\pm d}$.

\Case{2}{$n+k< C_3\bigl(\KK,S,f,\frac 13\bigr)$} 
Replacing~$r$ and~$s$ with~$-r$ and~$-s$ if necessary, we may assume
that $r>0$. Further, since~$n+k$ is assumed bounded,
we may assume that~$k$ and~$n$ are fixed, and we need to show that
there are only finitely many $\alpha\in\P^1(\K)$ satisfying
\begin{equation}
\label{eq:fnk}
  f^{(n+k)}(\alpha)^{r}/{f^{(k)}}(\alpha)^{s} \in R_S^*.
\end{equation}

By assumption we have $s\ne 0$. We let
\[
  g(X)=f^{(n)}(X)^r/X^{s}\in\K(X).
\]
Then~\eqref{eq:fnk} implies that
\[
  g\left({f^{(k)}}(\alpha)\right) = f^{(n+k)}(\alpha)^{r}/{f^{(k)}}(\alpha)^{s}  \in R_S^*.
\]
In the notation of Theorem~\ref{theorem:fnainG}(a), this says that
${f^{(k)}}(\alpha)\in\Gcal(\K,g,R_S^*)$.  If $\Gcal(\K,g,R_S^*)$
is finite, then ${f^{(k)}}(\alpha)$ takes on only finitely many
values, and Lemma~\ref{lem:fnaeqc} tells us that there are only
finitely many values of~$\alpha$.

On the other hand, if $\Gcal(\K,g,R_S^*)$ is infinite, then
Theorem~\ref{theorem:fnainG}(a) says that~$g(X)$ has at most two zeros
and poles. But the assumption that~$0$ is not periodic implies
that~$0$ is a pole of~$g$, and the assumption that~$f(X)\ne
cX^{\pm{d}}$ implies that~$f^{(n)}(X)$ has at least two poles or zeros
distinct from~$0$.  Hence~$g$ has at least three poles and zeros. \qed

\subsection{Proof of Theorem~$\ref{thm:fnkaufna}$}
We have shown in the proof of Theorem~\ref{thm:fnkaufnars} that
if 
\[
  \bigl\{(n,k,\a,u) \in \Ecal(\KK,f,\Gamma) : n+k > C_3(\KK,S,f,1/3) \bigr\}
\]
is infinite, then $f(X) = cX^{\pm d}$ for some $c \in \K^*$.  We are
thus reduced to the situation that the following three conditions hold:
\begin{itemize}
\setlength{\itemsep}{0pt}
\item
The function $f(X)\in\K(X)$ is not of the form~$cX^{\pm d}$ for any $c \in \K^*$.
\item
The integers $n\ge1$ and $k\ge0$ are fixed and satisfy
\[
  n+k \le C_3(\KK,S,f,1/3). 
\]
\item
There are infinitely many $\a \in \Wander_\K(f)$ satisfying the
equation~\eqref{eqn:fnkaufka}, i.e., there are infinitely 
many pairs
\[
  (\a,u)\in\Wander_\KK(f)\times R_S^*\quad\text{satisfying}\quad
  f^{(n+k)}(\a) = u f^{(k)}(\a).
\]
\end{itemize}
Since~$k$ is fixed, we can set $\b=f^{(k)}(\a)$, so we see that there
are infinitely many pairs
\begin{equation}
  \label{eqn:buKRSFbub}
  (\b,u)\in\Wander_\KK(f)\times R_S^*\quad\text{satisfying}\quad
  f^{(n)}(\b) = u \b.
\end{equation}

We define 
\[
  m  = \LCM(2,3,\ldots,d^n+1) + 1,
\]
so in particular
\[
  m\ge7 \quad\text{and}\quad \gcd\bigl(m,(d^n+1)!\bigr) = 1.
\]
  
We fix coset representatives $c_1,\ldots,c_t\in R_S^*$ for the finite
group $R_S^*/(R_S^*)^m$. Then each~$u\in R_S^*$ can be written in the
form $c_i\g^m$ for some $i=i(u)\in\{1,\ldots,t\}$ and some
$\g=\g(u)\in R_S^*$. Hence in order to determine the number of pairs
satisfying~\eqref{eqn:buKRSFbub}, it suffices to study, for each
fixed~$c\in R_S^*$, the number of pairs  
\begin{equation}
  \label{eqn:buKRSFbcgmb}
  (\b,\g)\in\Wander_\KK(f)\times R_S^*\quad\text{satisfying}\quad
  f^{(n)}(\b) = c \g^m \b.
\end{equation}
We note that $\b\ne0$, since otherwise~$\b$ is periodic.

In order to apply Lemma~\ref{lem:FXcGXYm}, we consider three possible
ways to write the rational function~$f^{(n)}(X)$, depending on its
order of vanishing at~$X=0$.  More precisely, we choose
$F_n,G_n\in\KK[X]$ satisfying $\gcd(F_n,G_n)=1$ so that~$f^{(n)}(X)$
has one of the following forms:
\begin{align*}
  \textbf{Case A:}&& f^{(n)}(X) &= {F_n(X)}/{G_n(X)},& F_n(0)&\ne0, \\
  \textbf{Case B:}&& f^{(n)}(X) &= {XF_n(X)}/{G_n(X)},& F_n(0)G_n(0)&\ne0, \\
  \textbf{Case C:}&& f^{(n)}(X) &= {X^eF_n(X)}/{G_n(X)},& F_n(0)G_n(0)&\ne0,\quad e\ge2.
\end{align*}
(We note that by homegeneity, we may assume that either~$F_n$ or~$G_n$ is monic.)
These three cases lead to studying points on the following three curves:
\begin{align*}
  \textbf{Case A:}&& C:F_n(X)&=cY^mXG_n(X), \\
  \textbf{Case B:}&& C:F_n(X)&=cY^mG_n(X), \\
  \textbf{Case C:}&& C:X^{e-1}F_n(X)&=cY^mG_n(X).
\end{align*}
We let $\widetilde C$ denote a smooth projective model of~$C$. 
Applying Lemma~\ref{lem:FXcGXYm}, each of the three cases
leads to two subcases, according to the relative degrees of~$F_n$
and~$G_n$. To ease notation, we let 
\[
  d_{F_n}=\deg F_n \quad\text{and}\quad d_{G_n}=\deg G_n.
\]
Then Lemma~\ref{lem:FXcGXYm} yields:
\[
\frac{2\genus(\widetilde C)}{m-1} = \begin{cases}
  \nu(F_nG_n)
  &\text{in Case A, $d_{F_n}\ne d_{G_n}+1$,} \\
  \nu(F_nG_n)-1
  &\text{in Case A, $d_{F_n}= d_{G_n}+1$,} \\
  \nu(F_nG_n)-1
  &\text{in Case B, $d_{F_n}\ne d_{G_n}$,} \\
  \nu(F_nG_n)-2
  &\text{in Case B, $d_{F_n}= d_{G_n}$,} \\
  \nu(F_nG_n)
  &\text{in Case C, $d_{F_n}+e-1\ne d_{G_n}$,} \\
  \nu(F_nG_n)-1
  &\text{in Case C, $d_{F_n}+e-1= d_{G_n}$,} 
\end{cases}
\]
where, as before, $\nu(H)$ be the number of \emph{distinct} complex
roots of a polynomial $H$.  
Since $m\ge7$, we see that either $\genus(\widetilde C)=0$, or else
$\genus(\widetilde C)\ge3$. In the latter case, Faltings'
theorem~\cite{Falt83, Falt86} (Mordell conjecture) tells us
that~$C(\KK)$ is finite. So it remains to analyze the six cases with
$\genus(\widetilde{C})=0$, as described in Table~\ref{table:genus0}.
The remainder of the proof is a case-by-case analysis.
\begin{table}[ht]
\[
\begin{array}{|c||c|c|c|} \hline
  \text{Case} & C & d_{F_n}~\text{and}~d_{G_n} & \nu(F_nG_n) \\ \hline\hline
  \textbf{A1} & F_n(X)=cY^mXG_n(X)& d_{F_n}\ne d_{G_n}+1& 0 \\ \hline
  \textbf{A2} & F_n(X)=cY^mXG_n(X)& d_{F_n}=d_{G_n}+1& 1 \\ \hline
  \textbf{B1} & F_n(X)=cY^mG_n(X)& d_{F_n}\ne d_{G_n}& 1 \\ \hline
  \textbf{B2} & F_n(X)=cY^mG_n(X)& d_{F_n}= d_{G_n}& 2 \\ \hline
  \textbf{C1} & X^{e-1}F_n(X)=cY^mG_n(X)& d_{F_n}+e+1\ne d_{G_n}& 0\\ \hline
  \textbf{C2} & X^{e-1}F_n(X)=cY^mG_n(X)& d_{F_n}+e+1=d_{G_n}& 1\\ \hline
\end{array}
\]  
\caption{Cases with $\genus(\widetilde C)=0$}
\label{table:genus0}
\end{table}

\Case{A1}{$\nu(F_nG_n)=0$}
In this case $F_n$ and~$G_n$ are constants, so $f^{(n)}(X)$ is
constant, contradicting $\deg f \ge2$.

\Case{C1}{$\nu(F_nG_n)=0$}
Again $F_n$ and~$G_n$ are constants, so $f^{(n)}(X)=cX^e$ for some
$e\ge2$. This is only possible if~$f$ has the form~$f(X)=aX^{\pm d}$.

\Case{A2}{$\nu(F_nG_n)=1$ \& $d_{F_n}=d_{G_n}+1$}
One of~$F_n$ or~$G_n$ is constant. The degree condition forces~$G_n$
to be constant, and hence~$d_{F_n}=1$. Therefore
\[
  d^n = \deg f^{(n)}(X) = \deg F_n(X)/G_n(X) =\max\{d_{F_n},d_{G_n}\} = 1.
\]
This  contradicts $\deg f\ge2$.

\Case{C2}{$\nu(F_nG_n)=1$ \& $d_{F_n}+e-1=d_{G_n}$}
One of~$F_n$ or~$G_n$ is constant. The degree condition forces~$F_n$
to be constant, and hence~$d_{G_n}=e-1$ and thus $d^n = \deg f^{(n)} = e$.
Therefore $f^{(n)}(X) = X^eF_n(X)/G_n(X)$ has the form
\[
  f^{(n)}(X) = aX^{d^n}/(X-b)^{d^n-1}.
\]
We are going to prove that this forces~$n=1$.

More generally, 
let
\[
\psi(X)  = aX^D/(X-b)^{D-1}
  \quad\text{with $D\ge2$ and $ab\ne0$,}
\]
and suppose that~$g$ and~$h$ are rational functions satisfying
\[
  g\circ h(X) = \psi(X).
\]
We claim that~$g$ or~$h$ has degree~$1$. 

To see this, we first note that 
\begin{equation}
\label{eq:efb}
  e_\psi(\infty)=1\quad\text{and}\quad e_\psi(b)=D-1,
\end{equation}
since if we use the linear fractional transformation $L(X)=X^{-1}$ to
move~$\infty$ to~$0$ and~$b$ to~$b^{-1}$, we have
\[
  L^{-1}\circ \psi\circ L(X) =\psi(X^{-1})^{-1}  =  a^{-1}X  (1 -bX)^{D-1},
\]
from which it is easy to read off the ramification indices.  We next
use the fact that
\[
  h^{-1}\left(g^{-1}(\infty)\right) = (g\circ h)^{-1}(\infty) =\psi^{-1}(\infty) = \{b,\infty\}
\]
to conclude that~$\#g^{-1}(\infty)$ equals~$1$ or~$2$.

Suppose first that $g^{-1}(\infty)=\{c\}$ consists of a single point. 
We use multiplicativity of ramificaiton indices and~\eqref{eq:efb} to compute
\[
  1 = e_\psi(\infty) = e_h(\infty)e_g(c)
  \quad\text{and}\quad
  D-1 = e_\psi(b) = e_h(b)e_g(c).
\]
The first equality gives~$e_g(c)=1$, and then the second gives~$e_h(b)=D-1$. 
This gives the estimate
\[
  \deg h \ge e_h(b) = D-1 = \deg\psi-1 = (\deg g)(\deg h) - 1.
\]
Therefore
\[
  1 \ge (\deg h)(\deg g-1),
\]
which forces either $\deg g=1$ or $\deg h=1$. 

We suppose next that $g^{-1}(\infty)$ consists of two points, i.e., 
suppose that~$h(\infty)\ne h(b)$. Then $h^{-1}\bigl(h(b)\bigr)=\{b\}$,
so $e_h(b)=\deg h$. Further, from
\[
  1 = e_\psi(\infty) = e_h(\infty)e_g\bigl(h(\infty)\bigr),
\]
we see that $e_g\bigl(h(\infty)\bigr)=1$, and hence using
\[
\deg g = \sum_{c\in g^{-1}(\infty)} e_g(c) = e_g\bigl(h(\infty)\bigr)
+ e_g\bigl(h(b)\bigr),
\]
we find that $e_g\bigl(h(b)\bigr)=\deg g-1$. 
Hence
\[
  D-1 = e_\psi(b)  = e_h(b)e_g\bigl(h(b)\bigr) 
  = (\deg h)(\deg g-1) = D -\deg h.
\]
Therefore $\deg h=1$.

This completes the proof in Case~C2 that~$n=1$, and hence that $f(X)=aX^d/(X-b)^{d-1}$.  

\Case{B1}{$\nu(F_nG_n)=1$ \& $d_{F_n}\ne d_{G_n}$}
The assumption that~$\nu(F_nG_n)=1$ implies that one of~$F_n$
and~$G_n$ is constant and the other has exactly one root.
Further, since in Case~B we have $f^{(n)}(X)=XF_n(X)/G_n(X)$, we see
that~$f^{(n)}$ has one of the following forms:
\[
  f^{(n)}(X) = aX(X-b)^{d^n-1}
  \quad\text{or}\quad
  f^{(n)}(X) = aX/(X-b)^{d^n}.
\]
In order to prove that~$n=1$, we suppose that~$g$ and~$h$ are rational
functions satisfying
\[
  g\circ h = aX(X-b)^{D-1} \quad\text{or}\quad
  g\circ h = aX(X-b)^{-D}
\]
with
\[
b\ne 0 \quad\text{and}\quad D\ge2.
\]
Our goal is to show that either~$g$ or~$h$ is linear.

We start with $g\circ h = aX(X-b)^{D-1}$, so $(g\circ h)^{-1}(0)=\{0,b\}$.
Thus~$\#g^{-1}(0)=1$ or~$2$. Suppose first that $\#g^{-1}(0)=1$, say
$g^{-1}(0)=\{c\}$. Then~$g$ is totally ramified at~$c$, i.e., $e_g(c)=\deg g$.
The form of~$g\circ h$ implies that it is unramified at~$0$, so, by~\eqref{eqn:mult e} we have
\[
  1 = e_{g\circ h}(0) = e_h(0)e_g\left(h(0)\right) = e_h(0)e_g(c)=e_h(0)\deg g.
\]
Hence~$\deg g=1$.

Similarly, if $\#g^{-1}(0)=2$, say $g^{-1}(0)=\{c_1,c_2\}$, then
possibly after relabeling, we have $h^{-1}(c_1)=\{0\}$ and
$h^{-1}(c_2)=\{b\}$. In particular, the map~$h$ is totally ramified
at~$0$, i.e., $e_h(0)=\deg h$.  Again using the fact that~$g\circ h$
is unramified at~$0$, using~\eqref{eqn:mult e}, we find that
\[
  1 = e_{g\circ h}(0) = e_h(0)e_g\left(h(0)\right) = e_h(0)e_g(c_1)= e_g(c_1) \deg h.
\]
Hence~$\deg h=1$.

We next do the case that $g\circ h = aX(X-b)^{-D}$, so $(g\circ h)^{-1}(0)=\{0,\infty\}$.
Thus~$\#g^{-1}(0)=1$ or~$2$. Suppose first that $\#g^{-1}(0)=1$, say
$g^{-1}(0)=\{c\}$. Then~$g$ is totally ramified at~$c$, i.e., $e_g(c)=\deg g$.
The form of~$g\circ h$ implies that it is unramified at~$0$, so by~\eqref{eqn:mult e}
\[
  1 = e_{g\circ h}(0) = e_h(0)e_g\bigl(h(0)\bigr) = e_h(0)e_g(c)=e_h(0)\deg g.
\]
Hence~$\deg g=1$.

Similarly, if $\#g^{-1}(0)=2$, say $g^{-1}(0)=\{c_1,c_2\}$, then
possibly after relabeling, we have $h^{-1}(c_1)=\{0\}$ and
$h^{-1}(c_2)=\{\infty\}$. In particular, the map~$h$ is totally ramified
at~$0$, i.e., $e_h(0)=\deg h$.  Again using the fact that~$g\circ h$
is unramified at~$0$, using~\eqref{eqn:mult e}, we find that
\[
  1 = e_{g\circ h}(0) = e_h(0)e_g\left(h(0)\right) = e_h(0)e_g(c_1)= e_g(c_1) \deg h.
\]
Hence~$\deg h=1$.

This completes the proof for Case~B1 that $n=1$, and
either~$f(X)=aX(X-b)^{d-1}$ or $f(X)=aX/(X-b)^d$.

\Case{B2}{$\nu(F_nG_n)=2$ \& $d_{F_n}=d_{G_n}$}
The fact that~$F_n$ and~$G_n$ have the same degree means that neither
can be constant, so $\nu(F_nG_n)=2$ implies that~$F_n$ and~$G_n$ each
have exactly one root. Further, since $f^{(n)}(X)=XF_n(X)/G_n(X)$ in Case~B,
we can read off the degrees of~$F_n$ and~$G_n$, so we find that
\[
  f^{(n)}(X) = \frac{aX(X-b)^{d^n-1}}{(X-c)^{d^n-1}}
  \quad\text{with $b,c,0$ distinct.}
\]

In order to prove that~$n=1$, we suppose that~$g$ and~$h$ are rational
functions satisfying
\[
  g\circ h = \frac{aX(X-b)^D}{(X-c)^D}
  \quad\text{with $b,c,0$ distinct and $D\ge1$,}
\]
and our goal is to show that either~$g$ or~$h$ is linear.
We have $(g\circ h)^{-1}(\infty)=\{c,\infty\}$, so~$\#g^{-1}(\infty)=1$
or~$2$.

Suppose first that~$\#g^{-1}(\infty)=1$, say~$g^{-1}(\infty)=\{\widetilde{c}\}$.
Then~$g$ is totally ramified at~$\widetilde{c}$, i.e.,~$e_g(\widetilde{c})=\deg g$. The
form of~$g\circ h$ implies that~$g\circ h$ is unramified at~$\infty$,
hence, by~\eqref{eqn:mult e}
\[
  1=e_{g\circ h}(\infty) = e_h(\infty)e_g\left(h(\infty)\right)
  =e_h(\infty)e_g(\widetilde{c})=e_h(\infty) \deg g.
\]
Therefore $\deg g=1$.

Similarly, if~$\#g^{-1}(\infty)=2$, say~$g^{-1}(\infty)=\{c_1,c_2\}$,
then possibly after relabeling,~$h^{-1}(c_1)=\{\infty\}$ and
$h^{-1}(c_2)=\{c\}$. In particular,~$h$ is totally ramified
at~$\infty$, so~$e_h(\infty)=\deg h$. Again using the fact
that~$g\circ h$ is unramified at~$\infty$, using~\eqref{eqn:mult e}, we find that
\[
  1=e_{g\circ h}(\infty) = e_h(\infty)e_g\left(h(\infty)\right)
  =e_h(\infty)e_g(c_1)= e_g(c_1) \deg h.
\]
Therefore $\deg h=1$.

This completes the proof in Case~B2 that~$n=1$,
and hence that $f(X)=aX(X-b)^{d-1}/(X-c)^{d-1}$. \qed

\subsection{Proof of Theorem~$\ref{thm:2multdep-arb}$} 
Let $\alpha \in \K$ have the property that there exists a pair of
non-negative integers $(m,n)$ with $m>n>0$ and integers $r$ and $s$
and an $S$-unit $u\in R_S^*$ such that
\begin{equation}
\label{eq:multrs}
  \bigl(f^{(m)}(\alpha)\bigr)^r = u \bigl(f^{(n)}(\alpha)\bigr)^s.
\end{equation} 

Since, by Northcott's Theorem, the set $\PrePer(f)\cap \K$ is finite, it is enough to prove the finiteness of the set of points $\alpha\in\Wander_\K(f)$ satisfying~\eqref{eq:multrs}.

We note that if $r=0$ or $s=0$, then the finiteness of $\alpha\in\K$
satisfying~\eqref{eq:multrs} follows directly from
Theorem~\ref{theorem:fnainG} since
$f$ has $d \ge 2$ distinct roots. Thus, we may assume from now on that $rs\ne0$.

From~\eqref{eq:multrs} and the power saturation of~$R_S^*$ in~$\K^*$, we see
that
\[
  u\in R_S^*\cap(\K^*)^{\gcd(r,s)} = (R_S^*)^{\gcd(r,s)}.
\]
This allows us to take the~$\gcd(r,s)$-root of~\eqref{eq:multrs}, so
without loss of generality we may assume that
\[
  \gcd(r,s)=1. 
\]

For a prime ideal $\fp$ of the ring of integers $R_\K$ of $\K$, we
denote the (normalized additive) valuation on $\K$ at the place
corresponding to $\fp$ by $v_\fp:\K^*\onto\ZZ$.  As usual, we say that
a polynomial~$f(X)=c_0+c_1X+\cdots+c_dX^d$ has bad reduction at~$\fp$
if either $v_\fp(c_i)<0$ for some~$i$ or if~$v_\fp(c_d)>0$; otherwise
we say it has good reduction.  We let 
\[
  \cS_{f,\Gamma} :=  S \cup 
   \{\gp\in M_\K^0 : \text{$f$ has bad reduction at $\gp$} \},
\]
where we recall that~$S$ is the set of places~\eqref{eqn:S} with~$\Gamma=R_S^*$. 
It is a standard fact, even for rational functions, that if~$f$ has
good reduction at a prime, then so do all of its iterates;
see~\cite[Proposition~2.18(b)]{Silv07}. (This is especially easy to
see for polynomials.) Hence
\begin{equation}
\label{eq:SmS}
  \cS_{f^{(m)},\Gamma} \subseteq \cS_{f,\Gamma}, \quad \text{for all $m\ge1$.}
\end{equation}

We let
\[
  \RSfG := \bigl\{ \vartheta \in \K : \text{$ v_\fp(\vartheta)\ge 0$ for all $\fp \not \in \cS_{f,\Gamma}$} \bigr\}
\]
be the ring of $\cS_{f,\Gamma}$-integers in $\K$, and $ \RSfG^*$
denotes the group of $\cS_{f,\Gamma}$-units in $\K$.

Recall that we assume $rs\ne 0$. Replacing~$r,s$ by~$-r,-s$ if
necessary, we may assume that~$r>0$.  We consider two cases depending
on whether the initial point $\alpha$ in~\eqref{eq:multrs} satisfies
$\alpha \in \RSfG$.

\Case{A}{$\alpha \in  \RSfG$} 
In this case, our definitions ensure that every iterate
$f^{(k)}(\alpha)$ is in $\RSfG$, i.e.,
\begin{equation}
\label{eq:vp}
  v_\fp\bigl(f^{(k)}(\alpha)\bigr)\ge 0 
  \quad\text{for all $k\ge0$ and all $\fp\not \in \cS_{f,\Gamma}$.}
\end{equation}

We distinguish now the following four  subcases:

\Case{A.1}{$r>0$, $s<0$} 
Then equation~\eqref{eq:multrs} becomes
\[
\bigl(f^{(m)}(\alpha)\bigr)^r \bigl(f^{(n)}(\alpha)\bigr)^t= {u}\quad\text{with  $t=-s>0$.}
\]
Using the fact that $u\in\RSfG^*$, it follows that
\[
rv_\fp\bigl(f^{(m)}(\alpha)\bigr)+tv_\fp\bigl(f^{(n)}(\alpha)\bigr)=0
\quad\text{for all $\fp\not\in\cS_{f,\Gamma}$.}
\]
Since $r,t>0$, and since~\eqref{eq:vp} tells us that the iterates have
non-negative valuation, we conclude that
\[
  v_\fp\bigl(f^{(m)}(\alpha)\bigr)=v_\fp\bigl(f^{(n)}(\alpha)\bigr)=0
  \quad\text{for all $\fp\not\in\cS_{f,\Gamma}$.}
\]
In particular, we have $f^{(m)}(\alpha)\in\RSfG^*$.  Since $m>n>0$, we
have $m\ge2$, so Theorem~\ref{theorem:fnainG}(b) tells us that there
are only finitely many~$(m,\alpha)$ with this property unless~$f$ has
the form~$f(X)=aX^{\pm d}$, which would contradict the assumption
that~$0$ is not periodic for~$f$.

\Case{A.2}{$r> 0$ and   $s\ge 2$}   
Since
$\gcd(r,s)=1$, we can choose integers $a$ and $b$ with
$ar+bs=1$. Then~\eqref{eq:multrs} becomes
\begin{equation}
\label{eq:s pow}
  f^{(m)}(\alpha) ={u}^a \Bigl(\bigl(f^{(n)}(\alpha)\bigr)^a
  \bigl(f^{(m)}(\alpha)\bigr)^b\Bigr)^s.
\end{equation}
Since $f^{(m)}(\alpha)\in\RSfG$ and ${u}\in\RSfG^*$, we  see from~\eqref{eq:s pow} that
\[
  \bigl(f^{(n)}(\alpha)\bigr)^a \bigl(f^{(m)}(\alpha)\bigr)^b \in  \RSfG. 
\]

Clearly, if
\[
\bigl(f^{(n)}(\alpha)\bigr)^a \bigl(f^{(m)}(\alpha)\bigr)^b\in \RSfG^*
\]
then $ f^{(m)}(\alpha) \in \RSfG^*$ as well. 
As in Case A.1, since  $m\ge2$,  Theorem~\ref{theorem:fnainG}(b) tells us that there
are only finitely many~$(m,\alpha)$ with this property unless~$f$ has
the form~$f(X)=aX^{\pm d}$, which would contradict the assumption
that~$0$ is not periodic for~$f$.
Thus we may assume that
\begin{equation}
\label{eq:avoid RS*}
\bigl(f^{(n)}(\alpha)\bigr)^a \bigl( f^{(m)}(\alpha)\bigr)^b\not\in
\RSfG^*.
\end{equation}

Writing 
\[
f^{(m)}(\alpha)  = f\bigl(f^{(m-1)}(\alpha) \bigr),
\]
we see from Lemma~\ref{eq:GenS-T} that under the condition~\eqref{eq:avoid RS*}
 the exponent $s \ge 2$ in~\eqref{eq:avoid RS*} is bounded above by a quantity depending only on $\K$, $f$ and $\cS_{f,\Gamma}$. 

We assume first that $\deg f\ge 3$. Applying~\cite[Theorem~2.2]{BEG}
to \eqref{eq:s pow}, we conclude that there are only finitely many
values for $f^{(m-1)}(\alpha)$, and thus, Lemma~\ref{lem:fnaeqc} says
that there are only finitely many possibilities for~$m$ and~$\alpha\in\K$.

Similarly, if $\deg f=2$ and $s\ge 3$, then~\cite[Theorems~2.1]{BEG}
again says that there are only finitely many values for
$f^{(m-1)}(\alpha)$, so we are done.

It remains to deal with the case that $\deg f=s=2$. Since $m\ge 2$, writing 
\[
f^{(m)}(\alpha)  = f^{(2)}\bigl(f^{(m-2)}(\alpha) \bigr),
\]
and applying~\cite[Theorem~2.2]{BEG} to \eqref{eq:s pow} with
$f^{(2)}$ (since we assume that $f^{(2)}$ has only simple roots), we conclude that there are only finitely many values for
$f^{(m-2)}(\alpha)$, and therefore, by Lemma~\ref{lem:fnaeqc},  finitely many possibilities for~$m$ and~$\alpha\in\K$.

\Case{A.3}{$r\ge 2$ and   $s=1$} 
We note that if $n\ge 2$, then the same discussion as above holds for this case too (where we replace $m$ be $n$ and $s$ by $r$). We therefore consider only the case $n=1$, and~\eqref{eq:multrs} becomes 
\begin{equation}
\label{eq:r 2}
f(\alpha)=u^{-1}  \bigl(f^{(m)}(\alpha)\bigr)^r.
\end{equation}

If $r\ge 3$, then~\cite[Theorems~2.1]{BEG}
again says that there are only finitely many values for $\alpha$ and 
$f^{(m)}(\alpha)$, so we are done.

We assume thus $r=2$ in~\eqref{eq:r 2}. 
Since $\deg f\ge 2$ and $f$ has only simple roots, we apply Theorem~\ref{thm:fnkaufnars}, where $\rho$ in this case satisfies $\rho<1$, to conclude that there are finitely many $m$ and $\alpha$ satisfying~\eqref{eq:r 2}.

\Case{A.4}{$r=s=1$} 
This case leads to an equation of the form~\eqref{eqn:fnkaufka},  which  is covered by Theorem~\ref{thm:fnkaufnars}.

\Case{B}{$\alpha \not \in  \RSfG$} 
We choose a prime ideal $\fq$ of $R_\K$ with 
\[
\fq\not \in \cS_{f,\Gamma} \quad\text{and}\quad v_\fq(\alpha) < 0.
\]
Then, using~\eqref{eq:SmS}, we see that $\fq\not \in
\cS_{f^{(k)},\Gamma}$ for all $k\ge1$, and thus from the proof
of~\cite[Theorem~4.11]{OSSZ1}, we have
\begin{equation}
\label{eq:ord fm}
   v_\fq\bigl(f^{(k)}(\alpha)\bigr) = d^k  v_\fq(\alpha)\quad\text{for all $k\ge0$.}
\end{equation}
(In dynamical terms, this formula reflects the fact that~$\alpha$ is
in the $\fq$-adic attracting basin of the superattracting fixed
point~$\infty$ of~$f$.)

Applying this to~\eqref{eq:multrs}, we conclude that 
\begin{equation}
\label{eqn:rdmvqa}
   r d^m  v_\fq(\alpha) =  s d^n  v_\fq(\alpha).
\end{equation}
We also know that
\begin{equation}
\label{eqn:r0mnrsgcd}
  d\ge2,\quad r>0,\quad m>n>0,\quad \gcd(r,s)=1,\quad\text{and}\quad
  v_\fq(\alpha)\ne 0.
\end{equation}
It follows from~\eqref{eqn:rdmvqa} and~\eqref{eqn:r0mnrsgcd}
that
\[
  r=1\quad\text{and}\quad s=d^{m-n},
\]
and hence~\eqref{eq:multrs} becomes
\begin{equation}
\label{eq:multr1}
  f^{(m)}(\alpha) = {u}\bigl(f^{(n)}(\alpha)\bigr)^{d^{m-n}}. 
\end{equation} 

To facilitate a comparison with the proof of Theorem~\ref{thm:fnkaufnars}, 
we let~$m=n+k$, and then we reverse the 
roles of~$n$ and~$k$, which changes~\eqref{eq:multr1} into
\begin{equation}
\label{eq:multr2}
  f^{(n+k)}(\alpha) = {u}\bigl(f^{(k)}(\alpha)\bigr)^{d^{n}}. 
\end{equation} 

If  $n=1$, i.e., $f^{(k+1)}(\alpha) = {u}\bigl(f^{(k)}(\alpha)\bigr)^d$  then 
we use Theorem~\ref{theorem:fnainG}(a), applied with the rational function $f(X)/X^d$. 
Note that,  since $f$ has only simple roots and $X\nmid f$,   
the rational function $f(X)/X^d$  is not of  one of the forms 
described in Theorem~\ref{theorem:fnainG}(a).  Hence we have the desired finiteness in 
this case.
  
We now need to show that~\eqref{eq:multr2} has finitely many solutions
\begin{equation}
\label{eq:nkauZ2Z1}
  (n,k,\a,u) \in \ZZ_{\ge 2} \times \Z_{\ge 1} \times \Wander_\K(f) \times \Gamma.
\end{equation} 
We note that even for a fixed value of~$n$, we cannot apply
Theorem~\ref{thm:fnkaufnars} directly with $r=1$ and $s=d^n$, because
Theorem~\ref{thm:fnkaufnars} only deals with solutions satisfying $n \ge \rho$, 
while in our case 
\[
\rho =\log_d(s/r)+1=\log_d(d^n)+1=n+1.
\]

We replace~$\K$ by the extension field~$\L$ generated by the set of values
\[
  \{\b\in\ov\K : \b^{d^2}\in\Gamma\}.
\]
Note that~$\L/\K$ is a finite extension depending only on~$\K$,~$d$, and~$\Gamma$,
since~$\Gamma/\Gamma^{d^2}$ is a finite group.  

We consider now two cases: $d\ge 3$ and $d=2$.

If $d\ge 3$ we consider  the curve
\[
  C_1 : f(X) = Y^{d^2}.
\]
Since $f$ has only simple roots, applying the genus formula~\cite[Exercise A.4.6]{HinSil} for a smooth projective model $\widetilde{C}_1$ of the affine curve $C_1$, we have
\[
\genus(\widetilde{C}_1)=(d-1)(d^2-2)/2\ge 7
\] 
when $d\ge 3$. Therefore, by Faltings' theorem~\cite{Falt83,Falt86}, the set of~$\L$-rational
points~$C_1(\L)$ on~$C_1$ is finite. However, every solution $(n,k,\a,u)$
of the form~\eqref{eq:nkauZ2Z1} to the equation~\eqref{eq:multr2}
gives an $\L$-rational point~$C_1(\L)$ via the formula
\[
  \Bigl(  f^{(n+k-1)}(\alpha), u^{1/d^2}  \bigl(f^{(k)}(\alpha)\bigr)^{d^{n-2}} \Bigr) \in C_1(\L).
\]
(Note that $u^{1/{d^2}}\in\L$ by the definition of~$\L$.)
We conclude in particular that $f^{(n+k-1)}(\alpha)$ takes
on only finitely many values, and then Lemma~\ref{lem:fnaeqc} says
that there are only finitely many possibilities for~$n$,~$k$,
and~$\alpha$.  

If $d=2$, we consider  the curve
\[
  C_2 : f^{(2)}(X) = Y^{d^2}.
\]
In this case, since $f^{(2)}$ has simple roots by hypothesis, the formula~\cite[Exercise A.4.6]{HinSil} for the genus of a smooth projective model $\widetilde{C}_2$ of the curve $C_2$ becomes 
\[\genus(\widetilde{C}_2)=(d^2-1)(d^2-2)/2=3.
\] As above, Faltings' theorem~\cite{Falt83,Falt86} implies that the set of~$\L$-rational
points~$C_2(\L)$ on~$C_2$ is finite, and thus there are finitely many possibilities for~$n$,~$k$,
and~$\alpha$.

In fact, we remark that if $d=2$ and $n\ge 3$, one does not need the condition that $f^{(2)}$ has simple roots. Indeed, in this case one can consider  the curve
\[
  C_3 : f(X) = Y^{d^3}.
\]
Since $f$ has simple roots, the formula for the genus of a smooth projective model $\widetilde{C}_3$ of $C_3$ is 
\[\genus(\widetilde{C}_3)=(d-1)(d^3-2)/2=3,
\] and thus, by  Faltings' theorem~\cite{Falt83,Falt86}, we obtain the same finiteness conclusion as above.

This concludes the proof of Case~B,
and with it the proof of Theorem~\ref{thm:2multdep-arb}. \qed

\subsection{Proof of Theorem~$\ref{thm:diffprod}$} 

Relabeling, we assume that the $k$-tuple of distinct integers is ordered
so that
\[
  n_1>n_2>\cdots>n_k\ge0.
\]
By assumption, the polynomial~$F$ has the form
\[
  F(T_1,\ldots,T_k) = \sum_{i=1}^r c_i \prod_{j\in J_i} T_j
\]
for some disjoint partition $J_1\cup\cdots\cup J_r=\{1,2,\ldots,k\}$.
Relabeling, we may assume that $1\in J_1$.

We first note that $\bigl\{\a\in\ov\K : 0\in\Orbit_f(\a)\bigr\}$ is a
set of bounded height. Explicitly, if $f^{(n)}(\a)=0$, then
\begin{align*}
  h(\a) &\le \hhat_f(\a)+C_1(f) =  d^{-n} \hhat_f\bigl(f^{(n)}(\a)\bigr) +C_1(f) \\
 &  = d^{-n} \hhat_f(0) + C_1(f) \le \hhat_f(0) + C_1(f) \\
  & \le h(0) + 2C_1(f) = 2C_1(f).
\end{align*}
Thus for the proofs of~(a) and~(b), we may restrict attention
to~$\a\in\K$ such that $0\notin\Orbit_f(\a)$. Note that this also
implies that~$r\ge2$.

\par\noindent(a)\enspace
We rewrite~\eqref{eqn:Ffn1fn2fnk} to isolate the~$n_1$-term, 
\[
  f^{(n_1)}(\a) = \sum_{i=2}^r (-c_i) \prod_{j\in J_i} f^{(n_j)}(\a)
  \biggm/ c_1 \prod_{\hidewidth j\in J_1\setminus\{1\}\hidewidth } f^{(n_j)}(\a).
\]
(This is where we use the fact that $0\notin\Orbit_f(\a)$.)  We take
the height of both sides and use the submultiplicativity and
subadditivity properties
of~$h$~\cite[Exercise~B.20]{Silv07}, i.e.,
\[
  h\left(\prod_{i=1}^n\b_i\right) \le \sum_{i=1}^n h(\b_i),
  \qquad
  h\left(\sum_{i=1}^n\b_i\right) \le \sum_{i=1}^n h(\b_i) + \log n,
\]
together with the fact that $h(\beta^{-1}) = h(\beta)$ for $\b\ne0$.
This yields
\begin{align*}
h\left(f^{(n_1)}(\a)\right)
  &\le h\left(\sum_{i=2}^r (-c_i) \prod_{j\in J_i} f^{(n_j)}(\a)\right)
    + h\left(c_1 \prod_{\hidewidth j\in J_1\setminus\{1\}\hidewidth } f^{(n_j)}(\a)\right) \\
  &\le \sum_{i=2}^r \left( h(c_i) + \sum_{j\in J_i} h\left(f^{(n_j)}(\a)\right) \right)+ \log(r-1) \\
  &\omit\hfill$\displaystyle{}  + h(c_1) + \sum_{\hidewidth j\in J_1\setminus\{1\}\hidewidth } h\left(f^{(n_j)}(\a)\right)$ \\
  &= \sum_{j=2}^k  h\left(f^{(n_j)}(\a)\right) + \sum_{i=1}^r h(c_i) + \log(r-1) .
\end{align*}
Using Lemma~\ref{lem:canht}(a) to switch to canonical heights gives
\begin{align*}
  \hhat_f\left(f^{(n_1)}(\a)\right) & - C_1(f)\\
 & \le \sum_{j=2}^k  \Bigl( \hhat_f\left(f^{(n_j)}(\a)\right) + C_1(f) \Bigr) + \sum_{i=1}^r h(c_i) + \log(r-1),
\end{align*}
and then the transformation formula~$\hhat_f\circ f^{(n)}=d^n\hhat_f$
of Lemma~\ref{lem:canht}(b) gives 
\[
  d^{n_1} \hhat_f(\a) \le \Bigl( \sum_{j=2}^k d^{n_j} \Bigr)\hhat_f(\a) + kC_1(f)  + \sum_{i=1}^r h(c_i) + \log(r-1).
\]

Hence
\begin{equation}
\begin{split}
  \label{eqn:dn1sumdnihfa}
  \Bigl( d^{n_1} - \sum_{j=2}^k d^{n_j} \Bigr)\hhat_f(\a)
  &\le kC_1(f) + \sum_{i=1}^r h(c_i) + \log(r-1) \\
  &\le kC_1(f) + k h(F) + \log(k-1), 
  \end{split}
  \end{equation}
where we have used $r\le k$ and the definition of~$h(F)$.
We next use the fact that $n_1>n_2>\cdots>n_k\ge0$ to estimate
\begin{equation}
\begin{split}
  \label{eqn:dn1sumdnj}
  d^{n_1} - \smash[b]{ \sum_{j=2}^k d^{n_j} }
  &\ge d^{n_1} - d^{n_1-1} - d^{n_1-2} - \cdots - d^{n_1-k+1} \\
  &= \frac{d-2+d^{-k+1}}{d-1}\cdot d^{n_1}\\
  &\ge \begin{cases}
    2^{n_1-k+1} &\text{if $d=2$,} \\
    \frac12d^{n_1} &\text{if $d\ge3$.} 
    \end{cases}  
\end{split}
  \end{equation}
Using this bound in~\eqref{eqn:dn1sumdnihfa} gives a bound
for~$\hhat_f(\a)$ in terms of~$F$ and~$f$, and then using
$h\le\hhat_f+C_1(f)$ shows that~$h(\a)$ is bounded, which completes
of~(a).

\par\noindent(b)\enspace
Continuing with the computation from~(a), we note that~$n_1\ge k-1$,
so for $d\ge3$, the inequalities~\eqref{eqn:dn1sumdnihfa} and~\eqref{eqn:dn1sumdnj} yield
\[
  \hhat_f(\a) 
  \le \frac{2kC_1(f)}{d^{k-1}} + \frac{2k}{d^{k-1}} h(F) +  \frac{2\log(k-1)}{d^{k-1}}.
\]
Again using $h\le\hhat_f+C_1(f)$, together with the trivial estimates
\[
2k/d^{k-1}\le\frac{4}{3} \qquad \text{and} \qquad 
2\log(k-1)/d^{k-1}\le\frac{2}{9}\log2,
\]
 valid for $d\ge3$ and
$k\ge2$, we find that
\[
  h(\a) \le \frac{7}{3} C_1(f) + \frac{2k}{d^{k-1}} h(F) + \frac{2}{9}\log2.
\]

\par\noindent(c)\enspace
We are assuming that~$\a\in\K$ is wandering, so
Lemma~\ref{lem:canht}(d) says that $\hhat_f(\a)\ge C_2(\K,f) >0$.
Since we are further assuming that $0\notin\Orbit_f(\a)$, the
estimate~\eqref{eqn:dn1sumdnihfa} in~(a), combined with the lower
bound~\eqref{eqn:dn1sumdnj} yields
\begin{equation}
  \label{eqn:dn1d1d2dk1}
   d^{n_1} \le \frac{d-1}{d-2+d^{-k+1}} \cdot
  \frac{kC_1(f)+kh(F)+\log(k-1)}{C_2(\K,f)}.
\end{equation}
The right-hand side of~\eqref{eqn:dn1d1d2dk1} depends only
on~$\K$,~$f$, and~$F$, so it gives a bound for~$n_1$ depending only on
these quantities. Since $n_1>n_2>\cdots>n_k\ge0$, this completes the
proof that there are only finitely many $k$-tuples~$(n_1,\ldots,n_k)$
of distinct non-negative integers satisfying~\eqref{eqn:Ffn1fn2fnk},
and that the number of such $k$-tuples is bounded independently
of~$\alpha$. (This last statement would also follow from~(a), which
says that for all but finitely many $\a\in\K$, the
equation~\eqref{eqn:Ffn1fn2fnk} has no solutions.)
\qed

\appendix


\section{Computation of Singular Points of $\bar C$}
\label{app:A}
On the chart where $Z\ne0$, we dehomogenize to the equation $F(X)=cG(X)Y^m$.
A point is singular if and only if
\[
  F'(X)=cG'(X)Y^m\quad\text{and}\quad 0=cmG(X)Y^{m-1}.
\]
If $G(X)=0$, then $X=\b_j$ for some~$j$, and then from the equation of~$C$
we must have $F(\b_j)=0$, contradicting the assumption that~$F$ and~$G$ have no
common roots. So $Y=0$, and the equation of~$C$ forces $X=\a_k$ for some~$k$.
But we also have the condition $F'(X)=cG'(X)Y^m$, and evaluating at the point
$(\a_k,0)$ shows that $F'(\a_k)=0$. Thus~$\a_k$ needs to be a double root of~$F(X)$,
i.e., $e_k\ge2$.

We next consider the points on $\bar C$ satisfying
$Z=0$. Substituting~$Z=0$ into the equation of~$\bar C$ gives
$0=dbX^sY^m$, so either~$X=0$ or~$Y=0$.  So we are reduced to checking
the two points $[1,0,0]$ and $[0,1,0]$. To ease notation, we write
$\bar F(X,Z)$ and $\bar G(X,Z)$ for the homogenizations of~$F$ and~$G$.
We also let $M=m+d_G-d_F$, where we note that $M\ge2$ by the assumption
that $m\ge d_F+2$.

Around the point $[1,0,0]$, we dehomgenize $X=1$, so~$\bar C$ has the local
affine equation $\bar F(1,Z)Z^M-c\bar G(1,Z)Y^m=0$.
Using the facts tha $M\ge2$ and $m\ge2$, we see tha both the~$Y$ and~$Z$
derivatives of this equation vanish at~$[1,0,0]$, so~$[1,0,0]$ is always
a singular point. 

Finally, around the point $[0,1,0]$, we denomogenize $Y=1$, so~$\bar C$
has the local affine equation $\bar F(X,Z)Z^M-c\bar G(X,Z)=0$. Taking
the~$X$ and~$Z$ derivatives and subsituting~$[0,1,0]$, we see that~$[0,1,0]$
is a singular point if and only if $G_X(0,0)=G_Z(0,0)=0$. Hence~$[0,1,0]$
is singular if and only if~$d_G\ne1$.

\section*{Acknowledgement}

During the preparation of this work, A.~B. was supported by the NKFIH Grant K128088, 
A.~O. was supported by the
ARC Grant DP180100201,  I.~S.   was  supported   by the ARC Grant DP170100786
and J.~S. was supported by the  Simons Collaboration Grant \#241309.

\end{document}